\newtheorem{Theorem}{Theorem}[section]
\newtheorem{Corollary}[Theorem]{Corollary}
\newtheorem{Lemma}[Theorem]{Lemma}
\newtheorem{Proposition}[Theorem]{Proposition}
\theoremstyle{definition}
\newtheorem{Definition}[Theorem]{Definition}
\theoremstyle{remark}
\def \O {\Omega}
\def \bO {\partial \Omega}
\def \R {\mathbb{R}}
\def \Par {\partial_P \mathcal{C}}
\def \e {\epsilon}
\def \a {\alpha}
\def \b {\beta}
\def \t {\tau}
\def \la {\left\langle }
\def \ra {\right\rangle}
\def \det{\mathrm{det}}
\def \ra{\right\rangle}
\def \la{\left\langle}
\def \Id{\mathrm{Id}}
\def\Dbar{\overline D}
\def\xbar{\overline x}
\def\II{\mathrm{II}}
\newcommand{\coord}[2]{\left[#1\right]_{#2}}
\newcommand{\norm}[1]{\lvert #1\rvert}
\def\Omegabar{\bar{\Omega}}
\newcommand{\inner}[2]{\langle #1, #2\rangle}
\def\hstar{h^*}
\def\upperbound{\mathcal{K}}
\def\G{G}
\DeclareMathOperator{\Hess}{Hess}
\def \L{\Delta_{\phi}}
\def \Ric{\text{Ric}_{\phi}}
\DeclareMathOperator{\Vol}{Vol}
\begin{document}

\title[Exponential Convergence of Parabolic Optimal Transport]{Exponential Convergence of Parabolic Optimal Transport on Bounded Domains}
\author[F. Abedin and J. Kitagawa]{Farhan Abedin and Jun Kitagawa$^*$} 
\address{Department of Mathematics, Michigan State University, East Lansing, MI 48824}
\email{abedinf1@msu.edu}
\address{Department of Mathematics, Michigan State University, East Lansing, MI 48824}
\email{kitagawa@math.msu.edu}
\subjclass[2010]{35K96, 58J35}
\thanks{$^*$JK's research was supported in part by National Science Foundation grant DMS-1700094. \\
\hphantom{11l}$^*$ Corresponding author: kitagawa@math.msu.edu}
\begin{abstract}
We study the asymptotic behavior of solutions to the second boundary value problem for a parabolic PDE of Monge-Amp\`ere type arising from optimal mass transport. Our main result is an exponential rate of convergence for solutions of this evolution equation to the stationary solution of the optimal transport problem. We derive a differential Harnack inequality for a special class of functions that solve the linearized problem. Using this Harnack inequality and certain techniques specific to mass transport, we control the oscillation in time of solutions to the parabolic equation, and obtain exponential convergence. Additionally, in the course of the proof, we present a connection with the pseudo-Riemannian framework introduced by Kim and McCann in the context of optimal transport, which is interesting in its own right.
\end{abstract}

\maketitle

\section{Introduction}

Given two smooth domains $\O$, $\O^* \subset \R^n$, two probability measures $\mu$, $\eta$ defined respectively on $\O$ and $\O^*$, and a Borel measurable \emph{cost function} $c: \overline{\O} \times \overline{\O^*} \rightarrow \R$, the optimal transport problem is to find a $\mu$-measurable map $T : \O \rightarrow \O^*$ satisfying $T_{\#}\mu = \eta$ (where $T_{\#}\mu(E) := \mu(T^{-1}(E))$ for all measurable $E \subset \O^*$) such that
\begin{equation}\label{OT}
\int_{\O} c(x, T(x)) \ d\mu(x) = \max_{S_{\#}\mu = \eta} \int_{\O} c(x, S(x)) \ d\mu(x).
\end{equation}
Under mild assumptions on the cost function and the measures, it can be shown that the solution $T$ to \eqref{OT} exists (see, for example, \cite{Brenier91, GangboMcCann96}). If the measures $\mu$ and $\eta$ are absolutely continuous with respect to Lebesgue measure, and $c$ satisfies the bi-twist condition \eqref{bi-twist} below, the map $T$ is $\mu$-a.e. single valued and can be determined by the implicit relation
\begin{align*}
 \nabla_xc(x, T(x))=\nabla u(x),
\end{align*}
where the scalar-valued potential $u$ is a \emph{$c$-convex function} (see Definition \ref{def: c-convex function}) satisfying the Monge-Amp\`ere type equation
\begin{equation}\label{staticPDE}
\begin{cases}
\det[D^2u(x) - A(x, \nabla u(x))] = B(x,\nabla u(x)), \qquad x \in \O\\
T(\O) = \O^*,
\end{cases}
\end{equation}
where $A$ is a matrix-valued function and $B$ is scalar-valued, defined in terms of the cost function $c$ and the densities of the measures $\mu$, $\eta$. The issue of existence and regularity of solutions to the PDE \eqref{staticPDE} has been an active area of research for many years. For higher order regularity results, we refer the reader to \cite{MaTrudingerWang05, TrudingerWang09, Urbas97}.

One possible approach to finding a solution to the PDE above is to solve the parabolic PDE 
\begin{equation}\label{evolPDE}
\begin{cases}
\partial_t u(x,t) = \log \det[D^2u(x,t) - A(x,\nabla u(x,t))] - \log B(x,\nabla u(x,t)), \qquad x \in \O, \ t > 0 \\
\G(x,\nabla u(x,t)) = 0, \qquad x \in \partial \O \\
u(x,0) = u_0(x), \qquad x \in \O
\end{cases}
\end{equation}
for appropriate initial and boundary conditions $u_0$ and $\G$ (see Section \ref{sec: preliminaries}), and view a stationary solution as $t\to\infty$ as a solution to \eqref{staticPDE}. The study of existence, regularity, and asymptotic behavior of solutions to the parabolic problem \eqref{evolPDE} was initiated only recently through the works \cite{Kitagawa12} and \cite{KimStreetsWarren12}. 

The primary contribution of this paper is the following theorem on an exponential convergence rate of solutions to the parabolic equation \eqref{evolPDE}. The notation $C^{k_1}_xC^{k_2}_t$ will denote functions on a space-time domain which are $C^{k_1}$ in the space variable and $C^{k_2}$ in the time variable, with corresponding norms finite. Our main result is as follows:
\begin{Theorem}\label{thm: main}
 Suppose $u\in C^4_xC^3_t(\overline{\O} \times [0, \infty))$ is a solution on $\overline{\O} \times [0, \infty)$ to the parabolic equation \eqref{evolPDE} converging uniformly on $\overline{\O}$ to a stationary solution $u^\infty$ as $t\to\infty$, and $\upperbound$ is a constant such that
\begin{equation}\label{definitionofK}
 \lVert u\rVert_{C^4_xC^2_t(\overline{\O}\times [0, \infty))}+\lVert c\rVert_{C^4(\overline\O \times\overline{\O^*})}\leq \upperbound.
\end{equation}
If the cost function $c$ satisfies the bi-twist condition \eqref{bi-twist}, and $\Omega$ and $\Omega^*$ satisfy the $c$-convexity conditions \eqref{cconvexity} and \eqref{cstarconvexity}, then 
\begin{align*}
 \lVert u(\cdot, t)-u^\infty\rVert_{L^\infty(\Omega)}\leq C_1e^{-C_2t},\quad\forall t\geq 0
\end{align*}
for some constants $C_1$, $C_2>0$ depending only on $\upperbound$ and the dimension $n$.
\end{Theorem}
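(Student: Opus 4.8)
The plan is to reduce the theorem to an exponential decay estimate for the time derivative $v:=\partial_t u$: once one knows $\lVert v(\cdot,t)\rVert_{L^\infty(\O)}\le C_1e^{-C_2t}$, the conclusion follows from $u(\cdot,t)-u^\infty=-\int_t^\infty v(\cdot,s)\,ds$ by integrating the exponential bound. Differentiating \eqref{evolPDE} in $t$ shows that $v$ solves a linear equation $\partial_t v=w^{ij}D_{ij}v+b^iD_iv$ on $\O\times(0,\infty)$, where $w^{ij}$ is the inverse of $D^2u-A(x,\nabla u)$ (positive definite, with two-sided bounds and coefficient norms controlled by $\upperbound$ thanks to \eqref{definitionofK}), and $b^i$ collects the $p$-derivatives of $A$ and $\log B$; differentiating $\G(x,\nabla u)=0$ in $t$ gives the homogeneous oblique condition $\nabla_p\G(x,\nabla u)\cdot\nabla v=0$ on $\bO$. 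Since this operator carries no zeroth-order term, constants solve it, the oblique maximum principle makes $m(t):=\min_{\Omegabar}v(\cdot,t)$ nondecreasing and $M(t):=\max_{\Omegabar}v(\cdot,t)$ nonincreasing, and $\tilde v:=v-m(t_0)$ and $\hat v:=M(t_0)-v$ are nonnegative solutions of the same linear problem on $\O\times[t_0,\infty)$ --- this is the ``special class'' to which the Harnack inequality will be applied.

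The second ingredient, the part ``specific to mass transport,'' is a mass-balance identity: since each $\nabla u(\cdot,t)$ still determines, via $\nabla_xc(x,T_t(x))=\nabla u(x,t)$, a map with $T_t(\O)=\O^*$, the change of variables together with \eqref{evolPDE} forces
\begin{align*}
\int_{\O^*}\bigl(e^{-v(T_t^{-1}(y),t)}-1\bigr)\,d\eta(y)=0\qquad\text{for all }t\ge0.
\end{align*}
As $s\mapsto e^{-s}-1$ is strictly sign-reversing, $v(\cdot,t)$ cannot have a single sign on $\O$; hence $m(t)\le0\le M(t)$, so $\lVert v(\cdot,t)\rVert_{L^\infty(\O)}\le\osc_\O v(\cdot,t)$. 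It therefore suffices to show that $\osc_\O v(\cdot,t)$ decays exponentially.

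The heart of the argument --- and the step I expect to be hardest --- is a differential Harnack inequality for the nonnegative solutions $\tilde v,\hat v$ above, of the form $-\partial_t\log\tilde v+\lvert\nabla\log\tilde v\rvert_g^2\le C(\upperbound,n)(1+t^{-1})$, equivalently a scale-invariant parabolic Harnack $\sup_{\Omegabar\times\{t_0+1\}}\tilde v\le C\inf_{\Omegabar\times\{t_0+2\}}\tilde v$. I would obtain it by applying the maximum principle to a ``pressure'' quantity built from $\tilde v$, $\nabla\tilde v$ and $\partial_t\tilde v$, commuting derivatives through $w^{ij}D_{ij}$ via a Bochner-type identity. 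Here the Kim--McCann pseudo-metric $g$ associated with $-D^2_{xy}c$ is useful: the linearized Monge--Amp\`ere operator is, up to lower-order terms, the Laplace--Beltrami operator of $g$ restricted to the graph of $T_t$, and this viewpoint organizes the third-order and curvature terms, all of which are then bounded in terms of $\upperbound$. The delicate point is the boundary: when the pressure attains its maximum on $\bO$, the Hopf lemma produces a term proportional to the second fundamental form of $\bO$ in the metric $g$, which must have a favorable sign to be discarded --- this is exactly where the $c$-convexity hypotheses \eqref{cconvexity} and \eqref{cstarconvexity} enter, once reinterpreted as convexity of $\bO$ (and $\bO^*$) in the pseudo-Riemannian framework, and reconciling these with the oblique condition satisfied by $v$ is where most of the effort will go.

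Finally, from the parabolic Harnack I would close the argument by an oscillation iteration. Applying it to $\tilde v=v-m(t_0)$ gives $M(t_0+1)-m(t_0)\le C\,(m(t_0+2)-m(t_0))$, whence $m(t_0+2)-m(t_0)\ge(C-1)^{-1}\osc_\O v(\cdot,t_0+2)$ after using that $M$ is nonincreasing; applying it to $\hat v=M(t_0)-v$ gives symmetrically $M(t_0)-M(t_0+2)\ge(C-1)^{-1}\osc_\O v(\cdot,t_0+2)$. Adding these and using the monotonicity of $m$ and $M$ yields $\osc_\O v(\cdot,t_0+2)\le\theta\,\osc_\O v(\cdot,t_0)$ with $\theta=\tfrac{C-1}{C+1}<1$, uniformly in $t_0\ge0$. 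Iterating over time intervals of length two produces $\osc_\O v(\cdot,t)\le Ce^{-C_2t}$, and combined with $\lVert v(\cdot,t)\rVert_{L^\infty(\O)}\le\osc_\O v(\cdot,t)$ and the time integration from the first paragraph, this gives the claimed bound on $\lVert u(\cdot,t)-u^\infty\rVert_{L^\infty(\O)}$.
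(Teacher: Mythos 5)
Most of your skeleton matches the paper: reduction to $\theta=\partial_t u$, the mass-balance identity forcing $\inf_\O\theta\le 0\le\sup_\O\theta$, the maximum principle for the linearized oblique problem, a Li--Yau quantity $F=t(|\nabla^g f|_g^2-\a\partial_t f)$ with an interior parabolic inequality, and a Hopf-lemma contradiction at $\bO$ in which \eqref{cconvexity} and \eqref{cstarconvexity} supply the sign of the second-fundamental-form-type terms. But there is a genuine gap at exactly the point you flag as delicate, and it breaks your final iteration. Your scheme needs the parabolic Harnack inequality for \emph{both} nonnegative solutions $\tilde v=\theta-m(t_0)$ and $\hat v=M(t_0)-\theta$. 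The boundary computation, however, does not close for $\tilde v$: at a boundary maximum of $F$ one finds
\begin{equation*}
D_{\beta}F(x_0,t_0)=t_0\Bigl[-\chi\bigl(\nu^j_i-c^{r,\ell}c_{ij,r}\nu^{\ell}\bigr)\tau^i\tau^j-\G_{p_kp_s}f_kf_s+\a\,\G_{p_kp_s}f_k\theta_s\Bigr],
\end{equation*}
and while the first two terms are $\le 0$ under \eqref{cconvexity}--\eqref{cstarconvexity}, the last term (coming from $-\a D_\beta\partial_t f$, i.e.\ from $\partial_t\beta$) equals $-\frac{\a}{\hat v}\G_{p_kp_s}\theta_k\theta_s\le 0$ only when $f=\log\hat v$ with $\hat v=\mathrm{const}-\theta$, so that $\nabla f$ is a \emph{negative} multiple of $\nabla\theta$. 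For $\tilde v=\theta-\mathrm{const}$ the same term is $+\frac{\a}{\tilde v}\G_{p_kp_s}\theta_k\theta_s\ge 0$, no contradiction with Hopf's lemma is produced, and the sublinearity of $F$ at the boundary --- hence the Harnack for $\tilde v$ --- is not established. This is precisely why the paper stresses that the estimate is proved only for the special solutions $\Theta_k=\sup_\O\theta(\cdot,k-1)-\theta(\cdot,(k-1)+t)$ and that it is unclear whether it holds for arbitrary nonnegative solutions of \eqref{linear}.

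Consequently your symmetric oscillation step, which adds the two inequalities obtained from $\tilde v$ and $\hat v$ to get $\osc_\O\theta(\cdot,t_0+2)\le\frac{C-1}{C+1}\osc_\O\theta(\cdot,t_0)$, cannot be run as stated. The paper's fix is an asymmetric iteration that uses the mass-balance information \eqref{supandinf} as a substitute for the missing half of the Harnack: applying \eqref{harnack} only to $\Theta_k$ gives
\begin{equation*}
\sup_\O\theta(\cdot,k-1)-\inf_\O\theta(\cdot,k)\le C\bigl(\sup_\O\theta(\cdot,k-1)-\sup_\O\theta(\cdot,k+1)\bigr),
\end{equation*}
and since $\inf_\O\theta(\cdot,k)\le 0$ this first yields geometric decay of $\sup_\O\theta$, after which the same inequality together with $\sup_\O\theta\ge 0$ gives the matching lower bound on $\inf_\O\theta$; only then does one integrate in time. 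So in your write-up the mass balance must enter the iteration itself, not merely the reduction from $\lVert\theta\rVert_{L^\infty}$ to the oscillation; as written, the claimed two-sided Harnack is an unsupported (and, by the paper's own discussion, doubtful) step.
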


Previous work of the second author in \cite{Kitagawa12} establishes the existence of a function $u\in C^2_xC^1_t(\overline{\O} \times [0, \infty))$  that solves \eqref{evolPDE} for all times $t \geq 0$ and converges in $C^2(\overline\O)$ to a function $u^{\infty}(\cdot)$ as $t \rightarrow \infty$, where $u^{\infty}(\cdot)$ satisfies the elliptic optimal transport equation \eqref{staticPDE}. Using this result and a bootstrapping argument, we obtain the following corollary.
\begin{Corollary}\label{cor: exp conv for MTW}
 Suppose the cost function $c$ satisfies the bi-twist condition \eqref{bi-twist} and the Ma-Trudinger-Wang condition \eqref{A3}, and that $\Omega$ and $\Omega^*$ satisfy the $c$-convexity conditions \eqref{cconvexity} and \eqref{cstarconvexity} with $\delta$, $\delta^*>0$. Suppose the source and target measures $\mu$ and $\eta$ are absolutely continuous with smooth densities that are bounded away from zero and infinity on $\overline \O$ and $\overline{\O^*}$ respectively. Finally, suppose the initial condition $u_0 \in C^{4,\a}(\Omega)$ for some $\a \in (0,1]$, is locally, uniformly $c$-convex (as in Definition \ref{def: c-convex function}) and satisfies the boundary compatibility conditions \eqref{boundarycompatbility}. Then $u$ satisfies the hypotheses of Theorem \ref{thm: main} above.
\end{Corollary}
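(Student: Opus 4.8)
The plan is to take the existence of the flow and its convergence as given from \cite{Kitagawa12}, and then to raise the regularity of that solution to the order required by Theorem \ref{thm: main} by a parabolic bootstrap whose constants are uniform in time. Concretely, under the hypotheses of the corollary --- the bi-twist condition \eqref{bi-twist}, the Ma--Trudinger--Wang condition \eqref{A3}, strict $c$-convexity of $\O$ and $\O^*$, densities pinched between positive constants, and compatible $C^{4,\a}$ initial data --- the construction of \cite{Kitagawa12} supplies a solution $u\in C^2_xC^1_t(\overline{\O}\times[0,\infty))$ of \eqref{evolPDE} for all $t\ge 0$, with $C^2_xC^1_t$ norm on $\overline{\O}\times[0,\infty)$ finite (hence uniformly bounded in $t$), converging in $C^2(\overline{\O})$ --- in particular uniformly on $\overline{\O}$ --- to a solution $u^\infty$ of \eqref{staticPDE}. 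This already furnishes the uniform convergence demanded in Theorem \ref{thm: main}; it remains to upgrade $u$ to $C^4_xC^3_t(\overline{\O}\times[0,\infty))$ and to produce the constant $\upperbound$ of \eqref{definitionofK}.

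The first step is to observe that \eqref{evolPDE} is uniformly parabolic along this solution, with ellipticity constants controlled only by the quantities bounded in \cite{Kitagawa12} and $\|c\|_{C^4(\overline{\O}\times\overline{\O^*})}$. Writing $w:=D^2u-A(\cdot,\nabla u)$, the uniform $C^2_x$ bound forces $w\le C\,\Id$, while \eqref{evolPDE} reads $\det w=B(\cdot,\nabla u)\,e^{\partial_t u}$; since $B$ is pinched between positive constants and $\partial_t u$ is uniformly bounded, $\det w$ is pinched between positive constants, and together with the upper bound on $w$ this yields $w\ge c_0\,\Id$ for some uniform $c_0>0$. As $M\mapsto\log\det M$ is concave on the cone of positive matrices, the operator on the right-hand side of \eqref{evolPDE} is, along $u$, a uniformly parabolic concave fully nonlinear operator.

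The bootstrap is then run on parabolic slabs $\overline{\O}\times[t_0,t_0+2]$, $t_0\ge 0$, together with the initial slab $\overline{\O}\times[0,2]$. Interior parabolic Evans--Krylov estimates give an interior $C^{2,\a}_xC^{1,\a/2}_t$ bound with constants independent of $t_0$; the boundary condition $\G(x,\nabla u)=0$ is oblique --- obliqueness following from the strict $c$-convexity of $\O^*$ together with the uniform gradient bound --- so boundary Schauder estimates for oblique parabolic problems (as in Lieberman's work) promote this to a global $C^{2,\a}_xC^{1,\a/2}_t$ bound on $\overline{\O}\times[t_0+1,t_0+2]$, uniform in $t_0$, with the compatibility conditions \eqref{boundarycompatbility} and the $C^{4,\a}$ regularity of $u_0$ making the same bound valid on $\overline{\O}\times[0,2]$. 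Differentiating \eqref{evolPDE} in the spatial variables, each spatial derivative of $u$ then solves a linear uniformly parabolic equation whose coefficients lie in $C^\a$ in space-time with bounds uniform in $t_0$ (here the smoothness of the densities and of the cost $c$ is used), so iterating linear parabolic Schauder estimates raises the regularity of $u$ to that required by Theorem \ref{thm: main}, with all norms bounded by a constant depending only on the quantities above and on $n$. Taking $\upperbound$ to be that constant plus $\|c\|_{C^4(\overline{\O}\times\overline{\O^*})}$ verifies \eqref{definitionofK}, and every hypothesis of Theorem \ref{thm: main} is met.

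I expect the main obstacle to be the boundary regularity theory for the nonlinear oblique condition $\G(x,\nabla u)=0$, together with the uniformity of all the Evans--Krylov and Schauder constants in $t_0$: one must check that the obliqueness constant of $\G(x,\nabla u)=0$ stays bounded below along the whole flow --- which follows from $\delta^*>0$ and the uniform gradient bound, but should be made quantitative --- and that these estimates genuinely localize to unit time-slabs without loss, the latter being precisely where the compatibility conditions \eqref{boundarycompatbility} are needed to control the parabolic corner at $t=0$.
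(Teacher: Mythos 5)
Your overall strategy is the paper's: take the global solution and its $C^2(\overline{\O})$-convergence from \cite{Kitagawa12}, verify uniform parabolicity and uniform obliqueness, and then bootstrap with Lieberman-type Schauder estimates for linear uniformly parabolic oblique problems applied to the differentiated (linearized) equation \eqref{linear} to reach the $C^4_xC^3_t$ regularity and the constant $\upperbound$ of \eqref{definitionofK}.

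The one step where your write-up does not go through as stated is the passage from the uniform $C^2_xC^1_t$ bound to a global-in-time $C^{2,\a}_xC^{1,\a}_t$ bound up to $\partial \O$. Interior Evans--Krylov is fine, but you cannot ``promote'' it to the boundary by invoking linear oblique Schauder estimates: to apply those to the differentiated or linearized equation near $\partial\O$ you need the coefficients $w^{ij}$, i.e.\ $D^2u$, to be H\"older continuous up to the boundary, which is exactly what is being proved, and the boundary condition $\G(x,\nabla u)=0$ is itself fully nonlinear. What is required is boundary $C^{2,\a}$ theory for the fully nonlinear oblique problem, and this is precisely the content of \cite[Theorems 10.1 and 11.2, and Section 12]{Kitagawa12}, where the MTW condition \eqref{A3} and the uniform $c$- and $c^*$-convexity with $\delta,\delta^*>0$ enter; note that your sketch never uses \eqref{A3}, which is a sign the hard step has been bypassed rather than proved. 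The paper's proof simply cites those global $C^{2,\a}_xC^{1,\a}_t$ estimates and then runs exactly the linear Schauder bootstrap you describe (the coefficients are then H\"older and the boundary condition becomes a uniformly oblique linear condition for the relevant derivatives), so your argument is repaired by replacing the Evans--Krylov/boundary-Schauder paragraph with that citation; the rest, including the uniformity in $t$ and the handling of the initial corner via \eqref{boundarycompatbility}, matches the paper.
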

\emph{Proof of Corollary:} Under the Ma-Trudinger-Wang condition \eqref{A3} and the uniform $c$- and $c^*$-convexity of the domains (i.e. \eqref{cconvexity} and \eqref{cstarconvexity} with $\delta$, $\delta^*>0$), global $C_x^{2,\a}C^{1,\a}_t$ estimates of the solution $u(x,t)$ to \eqref{evolPDE} were obtained in \cite[Theorems 10.1 and 11.2, and Section 12]{Kitagawa12}. Thus, by applying boundary Schauder estimates for linear uniformly parabolic equations in non-divergence form with uniformly oblique boundary conditions (cf. \cite[Theorem 4.23 and Theorem 4.31]{Lieberman96}) to the linearized equation \eqref{linear}, we obtain the desired higher regularity of $u$. \qedsymbol

\bigskip

The parabolic flow \eqref{evolPDE} on Riemannian manifolds with no boundary was considered by Kim, Streets, and Warren in \cite{KimStreetsWarren12}, under a strong form of the Ma-Trudinger-Wang condition \eqref{A3}. There, the authors prove exponential convergence of the solution $u$ of \eqref{evolPDE} to the solution $u^{\infty}$ of the elliptic equation \eqref{staticPDE} (cf. \cite[Theorem 1.1]{KimStreetsWarren12}). Their proof relies on establishing a Li-Yau type Harnack inequality for solutions to the linearization of $\eqref{evolPDE}$, coupled with the observation that this linearization is actually a heat equation where the elliptic part is a conformal factor times the Laplace-Beltrami operator of a conformal change of a metric defined from the solution of the parabolic evolution itself (cf. \cite[Proposition 5.1]{KimStreetsWarren12} and discussion preceding Proposition \ref{prop: interior expression} below). However, in our case, because there is a nonempty boundary, it is not clear how to prove a full Harnack inequality: the major obstruction is that the Riemannian metric defining the Laplace-Beltrami operator is evolving in time. Although the relevant boundary condition turns out to be the Neumann condition with respect to this time varying metric, it seems that a Harnack inequality may not be expected without additional curvature conditions on the evolution of this metric. In manifolds with nonempty boundary, existing results generally require that the metric itself evolves under some curvature flow such as Ricci flow \cite{BailesteanuCaoPulemotov10} or Gauss curvature flow \cite{Chow91}. While there is a sizeable body of work on differential Harnack inequalities, none of them are directly applicable to the problem \eqref{evolPDE}. We also mention the result \cite{SchnurerSmoczyk03}, which treats a nonlinear evolution equation arising from Gauss curvature flow that resembles \eqref{evolPDE} in the case where the cost function is $c(x, y)=\la x, y\ra$, and with nonempty boundary. The authors of  \cite{SchnurerSmoczyk03} also obtain an exponential convergence result, but assume certain structural assumptions on the function $B$ in \eqref{evolPDE} that are not satisfied in the optimal transport case, and impose additional constraints on the initial data $u_0$.

It turns out that it is not necessary to establish a full Harnack inequality to prove the exponential convergence result. Our approach is to first obtain a one-sided Harnack inequality only for certain special solutions of the linearized equation arising directly from the solution of the parabolic flow. To do this, we prove a sublinearity bound in time for such solutions; in the interior, this can be shown in a manner similar to that of \cite{KimStreetsWarren12} with no boundary, but obtaining the necessary estimates at the boundary will require new techniques. We then make use of the fact that solutions of the parabolic flow give rise to mappings between probability measures that preserve total mass (see Lemma \ref{lem: massbalance} below) to obtain oscillation control in time from this one-sided Harnack inequality for the special solutions.

Let us comment here that one particular motivation for this exponential convergence result comes from numerics for optimal transport. Since the stationary state of \eqref{evolPDE} gives rise to the solution of the optimal transport problem between the measures $\mu$ and $\eta$, one could attempt to implement an algorithm that is initiated with some $c$-convex potential function and flows toward the desired solution via the equation \eqref{evolPDE}. Establishing quantitative rates of convergence for such an algorithm is consequently of paramount importance. One difficulty that should be noted here is that in the case with nonempty boundary, due to compatibility requirements with the boundary condition, there are some restrictions on what can be taken as an initial condition (compare to the case of no boundary, where one can simply take a constant function), and it is not always clear how to generate initial data that will still provide global existence. We plan to explore this issue of finding appropriate initial conditions in future work.

Lastly, we mention that the analysis of the boundary behavior can be carried out from a purely geometric standpoint. In this framework, we find a curious connection with the pseudo-Riemannian metric introduced by Kim and McCann for optimal transport in \cite{KimMcCann10}. We present this analysis in two different ways, one using more traditional ``PDE'' techniques, and another exploiting this (pseudo)-Riemannian framework.

The outline of the remainder of the paper is as follows. In Section \ref{sec: preliminaries} we give the necessary background for the optimal transport problem, including a recap of some previous results for  the parabolic version, and recall the idea behind the proof of exponential convergence on manifolds with no boundary. In Section \ref{sec: boundary sublinear} we handle the boundary case for the sublinear estimates for the linearized operator. For the benefit of the reader, we divide the proof of these estimates into the inner product case and the general cost function case. In Section \ref{sec: exponential} we finally obtain the exponential convergence result from the sublinear estimates derived in the previous sections. The final Section \ref{sec: kim mccann} provides the aforementioned alternative, geometric approach to the boundary sublinearity proof presented in Section \ref{sec: boundary sublinear}.

\section{Preliminaries}\label{sec: preliminaries}
\subsection{Basic Notions from Optimal Transport}
The notations $D^2$, $\nabla$, and $D_\beta$ will be used respectively for the Hessian matrix, the gradient vector, and the directional derivative in direction $\beta$ of a given function with respect to the space variable $x$. Spatial partial derivatives will be denoted by subscript indices, with the actual variable specified when necessary, while $D_x$ and $D_p$ will be used for the derivative matrix of a mapping with respect to the variable in the subscript. We will also follow the convention of summing over repeated indices. Time derivatives will be denoted by $\partial_t$.

When considering a Riemannian manifold $(M,g)$, we will denote the inner product and norm with respect to the metric $g$ by $\la \cdot, \cdot \ra_g$ and $|\cdot|_g$ respectively. The notation $\nabla^g$, $\Hess_g$, $\Delta_g$, and $\text{Ric}_g$ will be used for the gradient, Hessian, Laplacian, and Ricci tensor with respect to $g$. 

Regarding the cost function $c(x,y)$, derivatives in the $x$ variable will be denoted by subscripts preceding a comma, while derivatives in the $y$ variable will be denoted by subscripts following a comma. The notation $c^{i,j}$ denotes the entries of the inverse of the matrix $c_{i,j}$.

 We will assume from here onward that $\O$, $\O^*$ are open, smooth, bounded domains in $\R^n$. The outward-pointing unit normal to $\partial \O$ (resp. $\partial \O^*$) will be denoted by $\nu$ (resp. $\nu^*$). The function $\hstar$ will be a normalized defining function for $\O^*$; i.e. $\hstar = 0$ on $\partial \O$, $\hstar < 0$ on $\O$, and $\nabla \hstar = \nu^*$ on $\partial \O^*$. The measures $\mu$, $\eta$  are assumed to be absolutely continuous with respect to $n$-dimensional Lebesgue measure, with densities $\rho$, $\rho^*$  respectively satisfying the bounds $0 < \lambda \leq \rho, \rho^* \leq \Lambda < \infty$ and the mass balance condition
\begin{equation}\label{massbalance}
\int_{\O} \rho = \int_{\O^*} \rho^*.
\end{equation}
We will also assume $c \in C^{4,\a}(\overline{\O} \times \overline{\O^*})$ for some $\a \in (0,1]$, and that
\begin{align}
y &\mapsto \nabla_x c(x,y) \text{ is a diffeomorphism }\forall x\in \overline\O,\notag\\
x &\mapsto \nabla_y c(x,y) \text{ is a diffeomorphism }\forall y\in \overline{\O^*}.\label{bi-twist}
\end{align}
For any $p \in \nabla_x c(x,\O^*)$ and $x \in \O$, (resp. $q \in \nabla_y c(\O, y)$ and $y \in \O^*$), we denote by $Y(x,p)$ (resp. $X(q,y)$) the unique element of $\O^*$ (resp. $\O$) such that
\begin{equation}\label{A1}
(\nabla_x c)(x,Y(x,p))= p, \quad (\nabla_y c)(X(q,y),y) = q.
\end{equation}
We say \emph{$\O$ is $c$-convex with respect to $\O^*$} if the set $\nabla_y c(\O, y)$ is a convex set for each $y \in \O^*$. Similarly, \emph{$\O^*$ is $c^*$-convex with respect to $\O$} if the set $\nabla_x c(x, \O^*)$ is a convex set for each $x \in \O$. Analytically, these conditions are satisfied if we have
\begin{equation}\label{cconvexity}
\left[\nu^j_i(x) - c^{\ell, k} c_{ij,\ell}(x,y) \nu^k(x) \right] \t^i \t^j \geq \delta \norm{\tau}^2 \qquad \forall \ x \in \bO, \ y \in \overline{\O^*}, \ \tau \in T_x(\bO)
\end{equation}
and
\begin{equation}\label{cstarconvexity}
\left[(\nu^*)^j_i(y) - c^{k, \ell} c_{\ell, ij}(x,y) (\nu^*)^k(x) \right] (\t^*)^i (\t^*)^j \geq \delta^*\norm{\t^*}^2 \qquad \forall \ y \in \bO^*, \ x \in \overline{\O}, \ \tau^* \in T_y(\bO^*)
\end{equation}
for some constants $\delta$, $\delta^*\geq 0$ respectively, where we will always sum over repeated indices. If $\delta$ ($\delta^*$) is strictly positive, we say that \emph{$\O$ is uniformly $c$-convex with respect to $\O^*$} (\emph{$\O^*$ is uniformly $c^*$-convex with respect to $\O$}).

Define the matrix valued function $A$ by $A(x,p):=(D^2_x c)(x, Y(x,p))$. Since $Y(x,p)$ satisfies the equation $(\nabla_x c)(x,Y(x,p)) = p$, we can differentiate implicitly in $p$ to get 
$$(D^2_{x,y}c)(x, Y(x,p)) D_p Y(x,p) = \mathbb{I}_n.$$
Similarly, differentiating the equation $(\nabla_x c)(x,Y(x,p)) = p$ in $x$ gives 
$$(D^2_x c)(x, Y(x,p)) + (D^2_{x,y} c)(x,Y(x,p)) D_x Y(x,p) = 0.$$
We have chosen the convention $(DY)_{\ell m} = Y^{\ell}_m$ for differentiation either in the $x$ or $p$ variables. It follows that
$$A(x,p) = (D^2_x c)(x, Y(x,p)) = - (D_pY)^{-1}(x,p) D_x Y(x,p).$$

\begin{Definition}\label{def: c-convex function}
 A function $\varphi: \Omega\to \R$ is said to be \emph{$c$-convex} if for any point $x_0\in \Omega$, there exists a $y_0\in \Omega^*$ and $\lambda_0\in \R$ such that 
\begin{align*}
 \varphi(x_0)&=c(x_0, y_0)+\lambda_0,\\
 \varphi(x)&\geq c(x, y_0)+\lambda_0,\quad\forall x\in \Omega.
\end{align*}
A function $\varphi \in C^2(\Omega)$ is said to be \emph{locally, uniformly $c$-convex} if $D^2\varphi(x) - A(x, \nabla \varphi(x)) > 0$ as a matrix for every $x \in \overline{\O}$. 
\end{Definition}
Although we will not use it explicitly in this paper, we also mention the, by now well-known, Ma-Trudinger-Wang condition. This condition (or rather a stronger version of it) was first used to obtain interior $C^{2, \alpha}$ regularity of solutions to the elliptic optimal transport equation \eqref{staticPDE} in \cite{MaTrudingerWang05}. It was proven to be a necessary condition for regularity theory in \cite{Loeper09}, and it was shown that classical solutions for the parabolic equation \eqref{evolPDE} exist under the same condition by the second author in \cite{Kitagawa12}.
\begin{Definition}\label{def: MTW} The cost function $c(x,y)$ satisfies the Ma-Trudinger-Wang (MTW) condition if 
\begin{equation}\label{A3}
D_{p_i p_j} A_{k \ell}(x,p)\xi^i \xi^j \eta^k \eta^{\ell} \geq 0 \qquad \text{for all } x \in \overline{\O},\ p \in \nabla_x c(x, \O^*), \ \xi \perp \eta.
\end{equation}
\end{Definition}

\subsection{The Parabolic Optimal Transport Problem}

For a function  $u\in C^4_xC^2_t(\overline\O \times [0, \infty))$  (which, in the sequel, will be the solution to the parabolic optimal transportation problem), we will employ the following notation:
\begin{enumerate}[(i)]
\item $T(x,t) = Y(x,\nabla u(x,t))$
\item $B(x,p) = |\det \ (D^2_{x,y} c)(x,Y(x,p))|\cdot \frac{\rho(x)}{\rho^*(Y(x,p))}$
\item $\G(x,p) = \hstar(Y(x,p))$
\item $\beta(x,t) = \nabla_p \G(x,p) \bigg|_{p = \nabla u}$ 
\item $W(x,t) = D^2u(x,t) - A(x, \nabla u(x,t))$
\end{enumerate}

Using the above notation, we can now precisely state the parabolic optimal transportation problem. We seek to find a function $u\in C^4_xC^2_t(\overline\O \times [0, \infty))$ satisfying the evolution equation
\begin{equation}\label{PDE}
\begin{cases}
\partial_t u(x,t) = \log \det[D^2u(x,t) - A(x,\nabla u(x,t))] - \log B(x,\nabla u(x,t)), \qquad x \in \O, \ t > 0 \\
\G(x,\nabla u(x,t)) = 0, \qquad x \in \partial \O, \ t > 0 \\
u(x,0) = u_0(x), \qquad x \in \O.
\end{cases}
\end{equation}
We require the function $u_0 \in C^{4,\a}(\Omega)$ for some $\a \in (0,1]$, to be locally, uniformly $c$-convex as in Definition \ref{def: c-convex function}, and satisfy
\begin{equation}\label{boundarycompatbility}
\begin{cases}
\hstar(Y(x,\nabla u_0(x))) = 0 \text{ on } \partial \Omega \\
T_0(\O) = \O^*
\end{cases}
\end{equation}
where $T_0(x) := Y(x, \nabla u_0(x))$.

Let us establish some basic facts which will be needed throughout. 

\begin{Lemma} \label{lem: massbalance}
The function $\theta(x,t) := \partial_t u(x,t)$ satisfies
\begin{equation}\label{massbalanceintime}
\int_{\O} e^{\theta (x,t)} \rho(x) \ dx = \int_{\O^*} \rho^*(y) \ dy \qquad \text{for all } t \geq 0.
\end{equation}
\end{Lemma}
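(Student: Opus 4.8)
The plan is to read the Monge--Amp\`ere structure of \eqref{PDE} as the pointwise Jacobian (change-of-variables) form of the assertion that the map $T(\cdot,t)$ transports the measure $e^{\theta(\cdot,t)}\rho\,dx$ to $\rho^*\,dy$, and then integrate this identity over $\Omega$ using that $T(\cdot,t)\colon\Omega\to\Omega^*$ is a diffeomorphism for every $t\ge 0$. First I would exponentiate the first line of \eqref{PDE} to get, for $x\in\Omega$ and $t>0$,
\[
e^{\theta(x,t)}=\frac{\det W(x,t)}{B(x,\nabla u(x,t))};
\]
here $\det W>0$ on all of $\overline\Omega\times[0,\infty)$ because it is continuous, nowhere zero (otherwise $\log\det W$ is undefined in the equation), positive at $t=0$ by the local uniform $c$-convexity of $u_0$, and the set $\overline\Omega\times[0,\infty)$ is connected.

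Next I would compute the spatial Jacobian of $T(x,t)=Y(x,\nabla u(x,t))$. Differentiating in $x$ and using the identity $D_xY=-(D_pY)A$ (a consequence of the two implicit-differentiation relations for $Y$, and of $A=-(D_pY)^{-1}D_xY$, recorded just before Definition \ref{def: c-convex function}) gives
\[
D_xT(x,t)=D_xY(x,\nabla u)+D_pY(x,\nabla u)\,D^2u=D_pY(x,\nabla u)\bigl(D^2u-A(x,\nabla u)\bigr)=D_pY(x,\nabla u)\,W(x,t),
\]
while $(D^2_{x,y}c)(x,Y(x,p))\,D_pY(x,p)=\mathbb{I}_n$ gives $\lvert\det D_pY(x,p)\rvert=\lvert\det (D^2_{x,y}c)(x,Y(x,p))\rvert^{-1}$. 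Plugging the definition of $B$ from (ii) into the displayed formula for $e^{\theta}$ and using $\det W>0$, I obtain the pointwise Jacobian identity
\[
e^{\theta(x,t)}\rho(x)=\lvert\det D_xT(x,t)\rvert\,\rho^*\bigl(T(x,t)\bigr),\qquad x\in\Omega,\ t\ge 0
\]
(the case $t=0$ following by continuity in $t$, or directly from \eqref{boundarycompatbility}).

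It remains to integrate this over $\Omega$ and perform the change of variables $y=T(x,t)$, and the only non-routine point is that $T(\cdot,t)$ is a diffeomorphism of $\Omega$ onto $\Omega^*$ for each $t$. One self-contained route: at $t=0$ this is contained in \eqref{boundarycompatbility} together with $W_0>0$; for general $t$, the boundary condition in \eqref{PDE} reads $\hstar(T(x,t))=0$ on $\partial\Omega$, i.e.\ $T(\partial\Omega,t)\subset\partial\Omega^*$, so the topological degree $\deg(T(\cdot,t),\Omega,y)$ is defined for every $y\in\Omega^*$ and, by homotopy invariance in $t$, is independent of $t$; since $\det D_xT$ is continuous and nonvanishing on the connected set $\overline\Omega\times[0,\infty)$, it has a fixed sign $\epsilon\in\{\pm1\}$, hence this degree equals $\epsilon$, and since $T(\cdot,t)$ is moreover a local diffeomorphism with all Jacobians of sign $\epsilon$, each $y\in\Omega^*$ has exactly one $T(\cdot,t)$-preimage, which lies in $\Omega$. (Alternatively one may just cite the corresponding statement from \cite{Kitagawa12}.) With $T(\cdot,t)$ a diffeomorphism onto $\Omega^*$, the change of variables yields
\[
\int_{\Omega}e^{\theta(x,t)}\rho(x)\,dx=\int_{\Omega}\lvert\det D_xT(x,t)\rvert\,\rho^*(T(x,t))\,dx=\int_{\Omega^*}\rho^*(y)\,dy,
\]
which is \eqref{massbalanceintime}. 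I expect the main obstacle to be precisely this global bijectivity of $T(\cdot,t)$: the Jacobian identity itself is purely algebraic, but upgrading it to an equality of total masses needs the degree-theoretic (or previously established) input; the only other minor point is confirming $\det W>0$ for all time from continuity.
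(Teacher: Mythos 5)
Your proposal follows essentially the same route as the paper: compute $D_xT=D_pY\,W$, deduce $\lvert\det D_xT\rvert=\det W/\lvert\det D^2_{x,y}c\rvert$, rewrite the PDE as the pointwise Jacobian identity $e^{\theta}\rho=\lvert\det D_xT\rvert\,\rho^*(T)$, and integrate with the change of variables $y=T(x,t)$. The extra degree-theoretic justification you supply for the bijectivity of $T(\cdot,t)$ goes beyond what the paper writes (it simply invokes the change of variables formula, implicitly relying on \cite{Kitagawa12}); just note that your step ``hence this degree equals $\epsilon$'' really needs injectivity of $T_0$ (or the cited result), since \eqref{boundarycompatbility} by itself only gives surjectivity, so the citation fallback you mention is the cleaner way to close that point.
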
 
\begin{proof} Differentiating the identity $T(x,t) = Y(x,\nabla u(x,t))$, we obtain
$$T^k_{x_{\ell}}(x,t) = Y^k_{x_{\ell}}(x, \nabla u(x,t)) + Y^k_{p_j}(x,\nabla u(x,t)) u_{x_j x_{\ell}}, \qquad k, \ell = 1, \ldots, n.$$
In matrix notation,
\begin{align}
D_xT(x,t) & = D_xY(x,\nabla u(x,t)) + D_p Y(x,\nabla u(x,t)) D^2u(x,t) \notag\\
& = D_p Y(x,\nabla u(x,t)) (D^2u(x,t) - A(x,\nabla u(x,t)) \notag\\
& = D_p Y(x,\nabla u(x,t)) W(x,t) \notag\\
& = (D^2_{x,y}c)^{-1}(x, Y(x,\nabla u(x,t))) W(x,t).\label{derivative of T}
\end{align}
Consequently,
\begin{equation}\label{detT1}
|\det D_xT(x,t)| = \frac{\det W(x,t)}{|\det (D^2_{x,y} c)(x,T(x,t))|}.
\end{equation}
From \eqref{PDE}, it follows that
\begin{equation}\label{detT2}
e^{\partial_t u(x,t)} \rho(x) = |\det D_xT(x,t)| \rho^*(x,T(x,t)).
\end{equation}
Integrating over $\O$ and using the change of variables formula yields the desired identity. 
\end{proof}

Observe that, by \eqref{massbalanceintime} and the mass balance condition \eqref{massbalance}, $\theta$ must satisfy
\begin{equation}\label{supandinf}
\sup_{\O} \theta(\cdot, t) \geq 0 \quad \text{ and } \quad \inf_{\O} \theta(\cdot, t) \leq 0 \quad \text{ for all } t \geq 0.
\end{equation}

\begin{Lemma}\label{Wbetaisnormal} Let $\nu$ denote the outward pointing unit normal to $\O$, and let $W$ and $\beta$ be defined as above. Then
$$\nu(x) = \frac{W(x,t)\beta(x,t)}{|W(x,t)\beta(x,t)|} \quad \text{for all } (x,t) \in \partial \O \times [0, \infty).$$
\end{Lemma}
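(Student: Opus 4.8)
The plan is to differentiate the oblique boundary condition $\G(x,\nabla u(x,t))=0$ in directions tangent to $\bO$, convert the resulting relation into the statement that $W\beta$ is orthogonal to $\bO$, and then invoke a Hopf-type maximum principle argument to pin down its orientation.

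First I would record the two first-order identities that come from the definition $\G(x,p)=\hstar(Y(x,p))$: the chain rule gives $\nabla_x\G(x,p)=(D_xY)^T\nabla\hstar(Y(x,p))$ and $\beta(x,t)=\nabla_p\G\big|_{p=\nabla u}=(D_pY)^T\nabla\hstar(T(x,t))$. Since $A(x,p)=-(D_pY)^{-1}(x,p)D_xY(x,p)$ and $A=D^2_xc$ is symmetric, we have $(D_xY)^T=-A(D_pY)^T$, and hence the key algebraic identity $\nabla_x\G=-A\beta$, evaluated at $p=\nabla u(x,t)$.

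Next, fix $t\geq 0$ and $x\in\bO$, and differentiate the identity $\G(\cdot,\nabla u(\cdot,t))\equiv 0$ along an arbitrary $\tau\in T_x(\bO)$. The chain rule yields $\la\nabla_x\G,\tau\ra+\la\beta,D^2u\,\tau\ra=0$; substituting $\nabla_x\G=-A\beta$ and using that $A$ and $W=D^2u-A$ are symmetric, this reduces to $\la W\beta,\tau\ra=0$ for every $\tau\in T_x(\bO)$. Therefore $W(x,t)\beta(x,t)=\kappa(x,t)\nu(x)$ for some scalar $\kappa(x,t)\in\R$. Moreover $\beta(x,t)\neq 0$ on $\bO$: indeed $D_pY=(D^2_{x,y}c)^{-1}$ is invertible by the bi-twist condition \eqref{bi-twist}, while $\nabla\hstar=\nu^*\neq 0$ on $\bO^*$ and $T(x,t)\in\bO^*$ by the boundary condition; since also $W(x,t)>0$ by local uniform $c$-convexity, we conclude $W\beta\neq 0$ and hence $\kappa(x,t)\neq 0$.

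It remains to show $\kappa(x,t)>0$, and this is the only step that is not purely formal. The function $x\mapsto\hstar(T(x,t))$ is $\leq 0$ on $\overline{\O}$ --- because $T(\cdot,t)$ maps $\overline{\O}$ into $\overline{\O^*}$ and $\hstar\leq 0$ there --- and it vanishes identically on $\bO$ by the boundary condition; hence it attains its maximum over $\overline{\O}$ at every point of $\bO$, so its outward normal derivative there is $\geq 0$. Using the computation \eqref{derivative of T}, i.e. $D_xT=D_pY\,W$, this normal derivative equals $\la\nabla\hstar(T),D_pY\,W\nu\ra=\la\beta,W\nu\ra=\la W\beta,\nu\ra=\kappa$. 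Thus $\kappa\geq 0$, and combined with $\kappa\neq 0$ we get $\kappa>0$, so $\nu=W\beta/|W\beta|$ on $\bO\times[0,\infty)$, as claimed. The main obstacle, such as it is, is precisely this orientation question: it relies on the geometric fact --- part of the standing framework for the parabolic problem, established in \cite{Kitagawa12} --- that the flow keeps $T(\cdot,t)$ mapping $\overline{\O}$ into $\overline{\O^*}$ with $T(\cdot,t)(\O)=\O^*$.
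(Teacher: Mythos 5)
Your proof is correct and takes essentially the same route as the paper: differentiate the boundary condition tangentially to obtain $\la W\beta,\tau\ra=0$, use positivity of $W$ and nonvanishing of $\beta$ (via bi-twist and $\nabla\hstar=\nu^*$), and use that $\hstar\circ T\leq 0$ in $\O$ with equality on $\bO$ to fix the orientation. Your detour through $\nabla_x\G=-A\beta$ and the explicit outward-normal-derivative argument for the sign of $\kappa$ are just spelled-out versions of steps the paper compresses (it differentiates $\hstar(T(x,t))=0$ directly using $D_xT=D_pY\,W$ and settles the sign in one sentence).
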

\begin{proof} Fix $t \geq 0$. The boundary condition $\G(x,\nabla u(x,t)) = 0$ on $\partial \O$ is equivalent to saying $\hstar(T(x,t)) = 0$ on $\partial \O$. Therefore, by differentiating in any direction $\tau$ tangential to $\partial \O$, we get
$$\hstar_k(T(x,t)) T^k_{x_i}(x,t) \tau^i = 0.$$
In matrix notation,
$$\la W(x,t) (D_pY)^T(x, \nabla u(x,t)) \nabla \hstar(T(x,t)), \t \ra = 0.$$
By definition,
$$\beta(x,t) = (D_p Y)^T(x,\nabla u(x,t)) \nabla \hstar(Y(x,\nabla u(x,t))).$$
Therefore,
$$\la W(x,t) \beta(x,t), \t \ra = 0.$$
It follows that $W\beta$ is parallel to the unit outward pointing normal vector field $\nu$ on $\partial \O$. Since $\hstar < 0$ on $\O$, we can write $W\beta = \chi \nu$, where $\chi \geq 0$. Notice that by \eqref{detT1} and \eqref{detT2}, $W$ is positive definite. By bi-twist \eqref{bi-twist}, and the fact that $\nabla h^* = \nu^*$, we also know $\beta$ is non-zero. Consequently, $\chi = |W\beta|$ is non-zero.
\end{proof} 

\subsection{The Linearized Equation}

Differentiating \eqref{PDE} in $t$ gives the following linear equation for $\theta$:
\begin{equation}\label{linear}
\begin{cases}
\mathcal{L}\theta := w^{ij}\left(\theta_{ij} - D_{p_k} A_{ij} \theta_k \right) + D_{p_k} (\log B) \theta_k - \partial_t \theta = 0 \qquad \text{on } \mathcal{C}_T := \O \times [0,T]\\
D_{\beta} \theta = 0 \qquad \text{on } \partial \O \times [0,T),
\end{cases}
\end{equation}
where $D_{\beta}\theta := \beta \cdot \nabla \theta$, and where in the coefficients, $p = \nabla u(x,t)$. By the global $C^2$ estimates established in \cite{Kitagawa12}, the operator $\mathcal{L}$ is uniformly parabolic and, by \cite[Theorem 7.1 and Theorem 9.2]{Kitagawa12}, the boundary condition $D_{\b} \theta = 0$ is uniformly oblique for all time. Hence, there exist positive constants $c_1, c_2 > 0$ depending only on $\O, \O^*, B, c$ and $u_0$, but independent of $t$, such that $w^{ij} \xi_i \xi_j \geq c_1|\xi|^2$ for all $(x,t) \in \O$ and $\xi \in \R^n$, and $\beta \cdot \nu \geq c_2 > 0$ for all $x \in \partial \O$, $t > 0$.

Solutions to the linearized equation \eqref{linear} satisfy the following maximum principle (see also \cite[Theorem 8.1]{Kitagawa12}).

\begin{Proposition}\label{maxprinciple} Suppose $v$ is a solution to the linearized equation \eqref{linear}. Then
$$\max_{(x,t) \in \mathcal{C}_T} v(x,t)= \max_{x \in \Omega} v(x, 0), \qquad \min_{(x,t) \in \mathcal{C}_T} v(x,t)= \min_{x \in \Omega} v(x, 0).$$
\end{Proposition}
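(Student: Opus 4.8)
\textbf{Proof strategy for Proposition \ref{maxprinciple}.} The plan is a perturbation-plus-Hopf argument adapted to the oblique boundary condition, essentially the standard maximum principle for uniformly parabolic oblique-derivative problems (compare \cite[Theorem 8.1]{Kitagawa12}). Since \eqref{linear} is linear and homogeneous, $-v$ is again a solution, so it suffices to prove $\max_{\mathcal{C}_T} v = \max_{\overline{\O}} v(\cdot,0)$; the minimum statement follows by replacing $v$ with $-v$. The inequality $\geq$ is trivial. For $\leq$, fix $\epsilon>0$ and set $v_\epsilon := v - \epsilon t$. The key structural point is that $\mathcal{L}$ has \emph{no zeroth-order term}, so $\mathcal{L} v_\epsilon = \mathcal{L} v + \epsilon = \epsilon > 0$, while $D_\beta v_\epsilon = D_\beta v = 0$ on $\partial\O\times[0,T)$ because $D_\beta$ differentiates only in space. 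I claim $v_\epsilon$ attains its maximum over $\mathcal{C}_T$ only at $t=0$; granting this, $\max_{\mathcal{C}_T} v \leq \max_{\mathcal{C}_T} v_\epsilon + \epsilon T = \max_{\overline{\O}} v(\cdot,0) + \epsilon T$, and letting $\epsilon \downarrow 0$ finishes the proof.

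Suppose for contradiction that $M := \max_{\mathcal{C}_T} v_\epsilon > \max_{\overline{\O}} v(\cdot,0)$, and pick $(x_0,t_0)\in\overline{\O}\times[0,T]$ realizing $M$; necessarily $t_0 > 0$. The first, easy step is to rule out $x_0 \in \O$: at an interior maximizer, $\nabla v_\epsilon(x_0,t_0) = 0$, the spatial Hessian $D^2 v_\epsilon(x_0,t_0)$ is negative semidefinite, and $\partial_t v_\epsilon(x_0,t_0) \geq 0$ (a one-sided maximum in $t$ if $t_0 = T$, a genuine one if $t_0 < T$); since $(w^{ij})$ is uniformly positive definite, $w^{ij}(v_\epsilon)_{ij}(x_0,t_0) \leq 0$, the first-order terms vanish, and hence $\mathcal{L} v_\epsilon(x_0,t_0) \leq 0$, contradicting $\mathcal{L} v_\epsilon = \epsilon > 0$. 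The same computation shows that $v_\epsilon(x_1,t_1) < M$ at \emph{every} interior point $(x_1,t_1)$ with $t_1 > 0$; together with $v_\epsilon(\cdot,0) = v(\cdot,0) < M$ on $\overline{\O}$ (from the contradiction hypothesis), this gives $v_\epsilon < M$ at every point of $\O\times[0,T]$. Therefore $x_0 \in \partial\O$.

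The main step is to derive a contradiction in this boundary case, and this is where obliqueness enters. Since $\O$ is smooth and bounded it satisfies an interior sphere condition at $x_0$, so there is a ball $B_r(y) \subset \O$ with $\overline{B_r(y)} \cap \partial\O = \{x_0\}$ on which, by the previous paragraph, $v_\epsilon < M = v_\epsilon(x_0,t_0)$ for all $t \leq t_0$. The parabolic Hopf lemma applied to $\mathcal{L} v_\epsilon \geq 0$ (the operator being uniformly parabolic with no zeroth-order term, and $t_0 > 0$ so the relevant backward parabolic neighborhood of $(x_0,t_0)$ lies in $\mathcal{C}_T$) yields strict positivity of the outward normal derivative: $D_\nu v_\epsilon(x_0,t_0) > 0$. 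On the other hand, write $\beta(x_0) = (\beta\cdot\nu)(x_0)\,\nu(x_0) + \beta^\tau$, where $\beta^\tau \perp \nu(x_0)$ is tangent to $\partial\O$ at $x_0$; since $(x_0,t_0)$ maximizes $v_\epsilon(\cdot,t_0)$ over $\overline{\O}$ and $x_0$ lies on the closed hypersurface $\partial\O$, the tangential derivative $D_{\beta^\tau} v_\epsilon(x_0,t_0)$ vanishes. Hence the boundary condition gives $0 = D_\beta v_\epsilon(x_0,t_0) = (\beta\cdot\nu)(x_0)\, D_\nu v_\epsilon(x_0,t_0)$, and since $\beta\cdot\nu \geq c_2 > 0$ (uniform obliqueness, recalled above) this forces $D_\nu v_\epsilon(x_0,t_0) = 0$, contradicting the Hopf lemma.

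I expect the boundary case --- specifically the correct invocation of the parabolic Hopf lemma combined with the tangential/normal decomposition of the oblique vector field $\beta$ --- to be the only nonroutine point. The interior maximum principle, the $\epsilon\downarrow 0$ limit, and the reduction of the minimum statement to the maximum statement via linearity are all standard; the small technical care needed is in verifying the hypotheses of the parabolic Hopf lemma ($t_0>0$ and the interior sphere condition, both already available here) and in the observation that $\beta^\tau$ is genuinely tangent to $\partial\O$ so that its directional derivative at a boundary maximizer vanishes.
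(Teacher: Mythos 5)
Your argument is correct and follows essentially the same route as the paper: push the maximum to the parabolic boundary via the parabolic maximum principle (here made self-contained with the $v-\epsilon t$ perturbation, exploiting that $\mathcal{L}$ has no zeroth-order term), then exclude a lateral-boundary maximum by Hopf's lemma combined with the uniform obliqueness $\beta\cdot\nu\geq c_2>0$; the paper simply cites the oblique-direction form of Hopf's lemma (\cite[Lemma 2.8]{Lieberman96}) to get $D_\beta v(x_0,t_0)>0$ directly, whereas you re-derive that step through the tangential/normal decomposition of $\beta$, which is the standard proof of that same lemma.
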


\begin{proof} By the parabolic maximum principle, the maximum of $v$ occurs on the parabolic boundary $\Par_T := (\overline{\Omega} \times \left\{0 \right\}) \cup (\partial \Omega \times (0,T))$. Suppose there exists $(x_0, t_0) \in \partial \Omega \times (0,T)$ such that $v(x_0,t_0) = \max_{(x,t) \in \mathcal{C}_T} v(x,t)$. It then follows from Hopf's Lemma (cf. \cite[Lemma 2.8 and following paragraph]{Lieberman96}) that $D_{\beta}v(x_0,t_0) > 0$. However, this violates the boundary condition $D_{\beta} v = 0$, and so the maximum cannot occur on $\partial \Omega \times (0,T)$. The argument for the minimum follows in similar fashion.
\end{proof}

\subsection{Exponential Convergence on Manifolds with no Boundary}\label{sec: li yau}

The authors of \cite{KimStreetsWarren12} consider the parabolic flow \eqref{PDE} on a Riemannian manifold with no boundary and show exponential convergence of the solution $u$ of \eqref{PDE} to the solution $u^{\infty}$ of the elliptic equation \eqref{staticPDE}. A key ingredient in their proof of exponential convergence is a Li-Yau type Harnack inequality for positive solutions $v$ of the linearized equation $\mathcal{L} v = 0$ (cf. \cite[Theorem 5.2]{KimStreetsWarren12}). This strategy is motivated by the observation that the operator $\mathcal{L}$ is a heat-type equation with respect to the time-varying Riemannian metric $g$ with components $g_{ij} = w_{ij}$ (see discussion preceding Proposition \ref{prop: interior expression} below). 

Let us recall the main ideas behind the exponential convergence result in \cite{KimStreetsWarren12}. The crucial ingredient is a Li-Yau type Harnack inequality satisfied by positive solutions of the linearized equation (cf. \cite[Theorem 5.2]{KimStreetsWarren12}), whose proof is inspired by the seminal work \cite{LiYau86} concerning heat equations on Riemannian manifolds. Suppose $v$ is a positive solution to the linearized equation $\mathcal{L} v = 0$ on $\mathcal{C}_T$, where $T > 0$ is chosen to be sufficiently large. Let $f = \log v$ and consider the quantity
\begin{equation}\label{aux}
F = t(|\nabla^g f|^2_g - \a \partial_t f) = t(w^{ij}f_i f_j - \a \partial_t f),
\end{equation}
where $\a > 0$ is a constant to be determined and $\nabla^g$ denotes the gradient of a function with respect to the metric $g$. It is shown in \cite[Theorem 5.2]{KimStreetsWarren12} that $F$ is sublinear in $t$ everywhere in $\mathcal{C}_T$; that is, there exist constants $C_1, C_2 > 0$ (independent of $T$) such that $F(x,t) \leq C_1 + C_2 t$ for all $x \in \O,\  t \in [0, T]$. The sublinearity in $t$ of $F$ implies the differential Harnack inequality 
\begin{equation}\label{differentialharnack}
w^{ij}f_i f_j - \a \partial_t f \leq \frac{C_1}{t} + C_2,
\end{equation}
for some possibly different constants $C_1$ and $C_2>0$.
A standard argument (see, for instance, \cite[pg. 4345, Proof of Theorem 5.2]{KimStreetsWarren12}) then yields the parabolic Harnack inequality
\begin{equation}\label{harnack}
\sup\limits_{\Omega} v(\cdot, t) \leq C \inf\limits_{\Omega} v\left(\cdot, t+1\right) \qquad \text{for all } t \geq 1,
\end{equation}
where $C > 0$ is a constant independent of $t$.
Once \eqref{harnack} is obtained, a standard ``oscillation-decay-in-time'' argument (cf. \cite[Section 7.1]{KimStreetsWarren12}) shows that $\theta$ converges to a constant function on $\O$ as $t \rightarrow \infty$. Invoking \eqref{supandinf}, we conclude that $\theta \equiv 0$, and so $u(\cdot,t)$ converges as $t \rightarrow \infty$ to a function $u^{\infty}(\cdot)$ solving \eqref{staticPDE}.

We now provide the argument for sublinearity of $F$. The proof relies on an important parabolic inequality satisfied by $F$, \eqref{parabolicinequality},  which we will prove in Proposition \ref{prop: interior expression} below. 

\begin{Proposition}\label{prop: sublinearity if no boundary max}
 If $F$ does not attain a positive maximum on $\partial \Omega \times (0, T)$, then there exist constants $C_1'$ and $C'_2>0$ independent of $T$ such that 
\begin{align}\label{sublinearity inequality}
 F(x, t)\leq C'_1+C'_2t,\quad \forall (x, t)\in \mathcal{C}_T.
\end{align}
\end{Proposition}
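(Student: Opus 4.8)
The plan is to run the classical Li--Yau maximum-principle argument, localized in time to sub-cylinders, using the hypothesis precisely to rule out the spatial boundary. Fix $T'\in(0,T)$ and let $(x_0,t_0)$ be a point at which the continuous function $F$ attains its maximum over the compact set $\overline\Omega\times[0,T']$. If $F(x_0,t_0)\le 0$, then $F\le 0$ on $\overline\Omega\times[0,T']$ and the asserted bound holds trivially there, so assume $F(x_0,t_0)>0$. Since $F\equiv 0$ on $\overline\Omega\times\{0\}$ (recall the factor $t$ in \eqref{aux}), necessarily $t_0>0$; and if $x_0\in\partial\Omega$, then $(x_0,t_0)\in\partial\Omega\times(0,T')\subset\partial\Omega\times(0,T)$, which is excluded by hypothesis. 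Hence $x_0\in\Omega$.

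At this interior positive maximum the first- and second-order optimality conditions give $\nabla F(x_0,t_0)=0$, $D^2F(x_0,t_0)\le 0$ (hence $w^{ij}F_{ij}(x_0,t_0)\le 0$ by positive-definiteness of $(w^{ij})$), and $\partial_tF(x_0,t_0)\ge 0$ (with equality when $t_0<T'$); in particular the parabolic part $w^{ij}F_{ij}-\partial_tF$ is $\le 0$ there. Feeding this into the parabolic differential inequality \eqref{parabolicinequality} satisfied by $F$ (to be proven in Proposition \ref{prop: interior expression}) annihilates every term containing $\nabla F$ — including the drift contributions from $D_{p_k}A_{ij}$ and $D_{p_k}(\log B)$ — and reduces \eqref{parabolicinequality} to a purely algebraic inequality for the number $F(x_0,t_0)$. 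By the Li--Yau structure of that inequality, this is a quadratic inequality in $F(x_0,t_0)$ whose leading coefficient is strictly negative and proportional to $1/t_0$ (it originates from the Cauchy--Schwarz lower bound for the traced square of the Hessian of $\log v$), all remaining coefficients being controlled by $\upperbound$ and $n$; schematically it is of the form
\begin{align*}
\frac{a_1}{t_0}\,F(x_0,t_0)^2\le a_2\,F(x_0,t_0)+a_3\,t_0+a_4
\end{align*}
with $a_1>0$ and $a_2,a_3,a_4\ge 0$ depending only on $\upperbound$ and $n$. Solving this for $F(x_0,t_0)$ and using $\sqrt{t_0}\le\tfrac12(1+t_0)$ yields $F(x_0,t_0)\le C_1'+C_2't_0$ with $C_1',C_2'>0$ depending only on $\upperbound$ and $n$.

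Crucially, the constants $C_1',C_2'$ are independent of $T$ and $T'$: this is exactly where it is used that $F$ vanishes at $t=0$ (so nothing survives from the initial time) and that the coefficients in \eqref{parabolicinequality} come from the $C^4_xC^2_t$-bound $\upperbound$ on $u$ and $c$ rather than from any time-global quantity. Since $(x_0,t_0)$ maximizes $F$ over $\overline\Omega\times[0,T']$ and $t_0\le T'$, we obtain $F(x,T')\le F(x_0,t_0)\le C_1'+C_2'T'$ for every $x\in\overline\Omega$. As $T'\in(0,T)$ was arbitrary and $C_1',C_2'$ do not depend on it, $F(x,t)\le C_1'+C_2't$ holds for all $(x,t)\in\overline\Omega\times[0,T)$, and then for $t=T$ by continuity of $F$, which is \eqref{sublinearity inequality}.

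The only real care needed within this proposition is the bookkeeping that keeps $C_1',C_2'$ independent of $T$, handled above; the computation is otherwise the classical one. The genuine difficulty of the paper lies elsewhere: first in establishing the parabolic inequality \eqref{parabolicinequality} itself (which we are free to assume here), and, more importantly, in eventually removing the standing hypothesis — that is, controlling $F$ when it \emph{does} attain a positive maximum on $\partial\Omega\times(0,T)$. That is where the uniform obliqueness of the boundary condition $D_\beta\theta=0$ and the new boundary estimates (or, equivalently, the Kim--McCann pseudo-Riemannian viewpoint) of Sections \ref{sec: boundary sublinear} and \ref{sec: kim mccann} must enter.
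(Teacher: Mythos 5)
Your argument is correct and is essentially the paper's own proof: both run the classical Li--Yau maximum-principle argument, using the hypothesis to push the maximum into the (parabolic) interior, where the first/second-order conditions at the maximum reduce \eqref{parabolicinequality} to a quadratic inequality in $F(x_0,t_0)$ whose solution gives the linear-in-$t$ bound with constants depending only on $\upperbound$ and $n$. The only difference is cosmetic bookkeeping---the paper phrases it as a contradiction at the first time the barrier $C_1'+C_2't$ is crossed, while you maximize over sub-cylinders $\overline{\Omega}\times[0,T']$ and let $T'$ vary---which yields the same conclusion.
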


\begin{proof} First note that $F(\cdot, 0) \equiv 0$ because $\inf\limits_{\Omega} v(\cdot,0) > 0$, and so the bound holds at $t = 0$. Suppose there exists a first time $\tau \in (0,T)$ such that $F(y, \tau) \geq C'_1 + C'_2 \tau$ for some $y \in \O$. By going further in time if necessary, we may assume there exists a point $(x_0, t_0) \in \overline{\O} \times (0, T]$ such that $F(x_0, t_0) > C'_1 + C'_2t_0$ and $F$ attains a local maximum at $(x_0,t_0)$. If $(x_0,t_0)$ is an interior point of $\mathcal{C}_T$, it follows from \eqref{parabolicinequality} that
$$C_1 F(x_0,t_0)^2 - F(x_0,t_0) - C_2 t_0^2 \leq 0,$$
from which we conclude
\begin{equation}\label{upperboundforF}
F(x_0,t_0) \leq \frac{1 + \sqrt{1 + 4C_1C_2t_0^2}}{2C_1} \leq \tilde{C}_1 + \tilde{C}_2t_0
\end{equation}
for a different set of constants $\tilde{C}_1, \tilde{C}_2  > 0$ and for $t_0 > 0$ sufficiently large. If $C'_1, C'_2$ were chosen at the beginning to satisfy $C'_1 > \tilde{C}_1$ and $C'_2 > \tilde{C}_2$, then we reach a contradiction based on \eqref{upperboundforF}.
\end{proof}
Thus it is clear that on a manifold with no boundary, Proposition \ref{prop: sublinearity if no boundary max} combined with the discussion above yields exponential convergence, as is shown in \cite{KimStreetsWarren12}.

We finish this section by establishing the parabolic inequality \eqref{parabolicinequality} satisfied by $F$. It is shown in \cite[Proposition 5.1]{KimStreetsWarren12} that if $n\geq 3$ and
$$\psi(x,t) := \left(\frac{\rho^*(T(x,t))^2 \ \det D_xT(x,t)}{\norm{\det D^2_{x,y}c(x,T(x,t))}}\right)^{\frac{1}{n-2}},$$
then
$$\mathcal{L}v = \psi \Delta_{\psi g} v - \partial_t v,$$
where $\Delta_{\psi g}$ is the Laplace-Beltrami operator with respect to the time-varying metric $\psi g$ with $g_{ij} :=  w_{ij}$. By adapting the proof of the differential Harnack inequality for the heat equation established in \cite{LiYau86}, the authors of \cite{KimStreetsWarren12} establish a parabolic inequality for $F$ similar to  \eqref{parabolicinequality} in the case of manifolds with no boundary of dimension $n \geq 3$. The case $n=2$ is treated in \cite{KimStreetsWarren12} through the introduction of a third dummy dimension in a manner that causes the solution $u$ of \eqref{PDE} to have a product structure (cf. \cite[Subsection 7.1.2]{KimStreetsWarren12} for details). In the presence of boundary, such an argument for dealing with the two-dimensional case is almost certain to fail due to the requirement of uniform $c$- and $c^*$-convexity of the domains involved.

We elect to take a slightly different approach as introduced in \cite[Section 3]{Warren14}, which considers the weighted Laplacian $\L := \Delta_g - \la \nabla^g \phi, \nabla^g \cdot \ra_g$ for the manifold with density $(\O, g, e^{-\phi} d\Vol_g)$, where
$$\phi(x,t):=\log\left(\frac{\norm{\det D^2_{x,y}c(x,T(x,t))}}{\rho^*(T(x,t))^2 \ \det D_xT(x,t)}\right)^{\frac{1}{2}}.$$
By a simple calculation using coordinates, it is possible to verify that $\mathcal{L} = \L - \partial_t$. This approach has the advantage that the case of dimension $n=2$ does not need to be treated separately.

\begin{Proposition}\label{prop: interior expression}
 Under the same hypotheses as Theorem \ref{thm: main}, there exist constants $C_1$, $C_2$, and $C_3>0$, depending only on the constant $\upperbound$ defined in $\eqref{definitionofK}$ and the dimension $n$, such that whenever $v$ satisfies $\mathcal{L}v =0$,
 \begin{equation}\label{parabolicinequality}
\mathcal{L} F + 2 \la \nabla^g f, \nabla^g F \ra_g \geq \frac{1}{t}\left(C_1 F^2 - F - C_2 t^2 + C_3 t  |\nabla^g f|_g^2F\right).
\end{equation}
\end{Proposition}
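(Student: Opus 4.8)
The plan is to reduce \eqref{parabolicinequality} to a purely spatial Li--Yau--Bochner inequality and then run the $\infty$-Bakry--\'Emery version of the argument of \cite{LiYau86} (as adapted in \cite{KimStreetsWarren12}), carefully tracking the fact that the metric $g=g(t)$ with $g_{ij}=w_{ij}$ evolves in $t$. Recall from the discussion preceding the statement that $\mathcal L=\L-\partial_t$ and that $f=\log v$ (with $v>0$) satisfies $\partial_t f=\L f+|\nabla^g f|_g^2$. Write $H:=|\nabla^g f|_g^2-\a\,\partial_t f$, so that $F=tH$. Since $\L$ acts only in the space variables, $\mathcal L F=t\,\mathcal L H-H$ and $\nabla^g F=t\,\nabla^g H$; substituting $F=tH$ into \eqref{parabolicinequality} and cancelling the common factor $t$ shows that \eqref{parabolicinequality} is equivalent to
\begin{equation*}
\mathcal L H+2\la\nabla^g f,\nabla^g H\ra_g\ \geq\ C_1H^2-C_2+C_3|\nabla^g f|_g^2\,H,
\end{equation*}
so it suffices to prove this last inequality with $C_i=C_i(\upperbound,n)$.

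First I would derive an exact identity. Applying the weighted Bochner formula
\[
\L\bigl(|\nabla^g f|_g^2\bigr)=2|\Hess_g f|_g^2+2\la\nabla^g f,\nabla^g(\L f)\ra_g+2\,\Ric(\nabla^g f,\nabla^g f)
\]
at each fixed time, differentiating $\partial_t f=\L f+|\nabla^g f|_g^2$ in $t$, and performing the same rearrangement as in the fixed-metric case --- which funnels the (a priori third-order) terms containing $\nabla^g(\partial_t f)$ into the $\nabla^g H$ term on the left --- one arrives at
\[
\mathcal L H+2\la\nabla^g f,\nabla^g H\ra_g=2\a|\Hess_g f|_g^2+2\a\,\Ric(\nabla^g f,\nabla^g f)+\mathcal R,
\]
where $\mathcal R$ collects everything produced by $\partial_t$ falling on the coefficients of $\L$ and on the metric (schematically, contractions of $\partial_t w$ and $\partial_t\phi$ against at most two derivatives of $f$) together with the lower-order remnants of the $\la\nabla^g\phi,\nabla^g\cdot\ra_g$ part of $\L$. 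The key quantitative point is the bound
\[
|\mathcal R|\ \leq\ C(\upperbound)\bigl(1+|\nabla^g f|_g^2+|\nabla^g f|_g\,|\Hess_g f|_g+|\Hess_g f|_g\bigr),
\]
valid because $\partial_t w_{ij}=\theta_{ij}-D_{p_k}A_{ij}\theta_k$ and $\partial_t\phi$, $\nabla^g\phi$ are all bounded in terms of $\upperbound$; for the same reason $\Ric$ is bounded below by some $-K=-K(\upperbound,n)$, since the curvature of $g$ and $\Hess_g\phi$ are expressible through at most four spatial derivatives of $u$ and $c$. This is precisely where the $C^4_xC^2_t$ control in \eqref{definitionofK} enters.

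The remaining step is algebra. Using $|\Hess_g f|_g^2\geq\frac1n(\Delta_g f)^2$, the identity $\Delta_g f=\L f+\la\nabla^g\phi,\nabla^g f\ra_g$, and the evolution equation in the form $\L f=\a^{-1}\bigl((1-\a)|\nabla^g f|_g^2-H\bigr)$, one bounds a definite fraction of $2\a|\Hess_g f|_g^2$ below by
\[
\tfrac{1}{n\a}\bigl(H^2+2(\a-1)|\nabla^g f|_g^2H+(\a-1)^2|\nabla^g f|_g^4\bigr)-(\text{controlled error}).
\]
The $H^2$ term furnishes $C_1H^2$; the cross term furnishes $C_3|\nabla^g f|_g^2H$ (this is why the statement must carry a $|\nabla^g f|_g^2F$ term on the right); and the nonnegative quartic term $(\a-1)^2|\nabla^g f|_g^4$, together with the reserved portion of $2\a|\Hess_g f|_g^2$, absorbs --- by repeated applications of Young's inequality --- all of $\mathcal R$, the curvature contribution $-2K|\nabla^g f|_g^2$, and the error above, leaving only a constant $-C_2$. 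Fixing any $\a>1$ (so that $C_3>0$) and then choosing $C_1,C_2,C_3>0$ accordingly finishes the argument.

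The step I expect to be the main obstacle is the one flagged in the introduction: because $g$ is time-dependent, the clean fixed-metric Li--Yau identity acquires the extra term $\mathcal R$, and the whole scheme succeeds only if $\mathcal R$ is genuinely of lower order and bounded by $\upperbound$ --- which is exactly what dictates the regularity assumed in \eqref{definitionofK} --- and can then be absorbed without disturbing the signs of the three principal terms $H^2$, $|\nabla^g f|_g^2H$, and $|\nabla^g f|_g^4$. A secondary subtlety is carrying the conversions from $\Delta_g f$ to $\L f$ to $\partial_t f-|\nabla^g f|_g^2$ with enough precision that the coefficient structure $\propto\bigl(H+(\a-1)|\nabla^g f|_g^2\bigr)^2$ survives with definite constants.
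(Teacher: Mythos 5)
Your proposal is correct and follows essentially the paper's own proof: the weighted Bochner formula, the commutator/time-derivative errors from the evolving metric $g$ and weight $\phi$ bounded purely in terms of $\upperbound$, the trace inequality $\lVert\Hess_g f\rVert^2\geq\frac1n(\Delta_g f)^2$ with $\Delta_g f$ converted to $\L f$ and then to the evolution equation, and finally Young-type absorptions leaving $C_1H^2+C_3|\nabla^g f|_g^2H-C_2$ with $\a>1$. Your closing algebra, expanding $(\L f)^2=\a^{-2}\left(H+(\a-1)|\nabla^g f|_g^2\right)^2$ exactly, is an equivalent (indeed streamlined) form of the paper's five-term identity in $y=|\nabla^g f|_g^2$, $z=\partial_t f$ with the choices $\a=2$, $\e=\tfrac12$, $\d=\tfrac18$; the only slip is bookkeeping--the Bochner contribution enters with coefficient $2$, not $2\a$ (and correspondingly $\frac1{n\a^2}$ rather than $\frac1{n\a}$)--which only relabels constants.
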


\begin{proof}
We recall the well-known weighted Bochner formula
\begin{equation}\label{Bochner}
\L \left(|\nabla^g f|_g^2 \right) = 2||\Hess_g f||^2 + 2\la \nabla^g f, \nabla^g(\L f) \ra_g + 2\Ric(\nabla^g f, \nabla^g f),
\end{equation}
where $\Ric := \text{Ric}_g + \Hess_g \phi$. Clearly, $\Ric \geq -\mathcal{K}$, where $\mathcal{K}$ is defined in \eqref{definitionofK}. Since $\mathcal{L}v =0$, the function $f := \log v$ solves the equation
\begin{equation}\label{eqnforf}
\partial_t f = \L f + |\nabla^g f|_g^2.
\end{equation}
Consider the auxiliary function
$$F:=t\left(|\nabla^g f|_g^2 - \a \partial_t f\right), \qquad \a > 0.$$
By using \eqref{Bochner}, we obtain
\begin{align*}
\L F & = t\left(\L (|\nabla^g f|_g^2) - \a \L (\partial_t f) \right) \\
& = t\left(2||\Hess_g f||^2 + 2\la \nabla^g f, \nabla^g(\L f) \ra_g + 2\Ric(\nabla^g f, \nabla^g f) - \a \L (\partial_t f)\right).
\end{align*}
Direct computation shows that
$$\L (\partial_t f) \leq \partial_t (\L f) + C(||\Hess_g f|| + |\nabla^g f|_g),$$
where $C = C(\partial_t g, \partial_t \nabla g, \partial_t \nabla \phi) \geq 0$ depends only on $\upperbound$. Therefore,
\begin{align*}
\L F & \geq t\left(2||\Hess_g f||^2 + 2\la \nabla^g f, \nabla^g(\L f) \ra_g + 2\Ric(\nabla^g f, \nabla^g f) - \a \partial_t (\L f) -\a C(||\Hess_g f|| + |\nabla^g f|_g) \right) \\
& \geq t\left(||\Hess_g f||^2 + 2\la \nabla^g f, \nabla^g(\L f) \ra_g - \a \partial_t (\L f) -  C_1|\nabla^g f|_g^2 - C_2 \right)
\end{align*}
where we have used Cauchy's inequality and the lower bound for $\Ric$. From \eqref{eqnforf} and the definition of $F$, it follows that
$$\L f = -\left(\frac{F}{t} + (\a - 1) \partial_t f \right).$$
Therefore,
\begin{align*}
2\la \nabla^g f, \nabla^g(\L f) \ra_g & = - 2\la \nabla^g f, \nabla^g \left(\frac{F}{t} + (\a - 1) \partial_t f \right) \ra_g \\
& = -\frac{2}{t}\la \nabla^g f , \nabla^g F \ra_g - 2(\a - 1) \la \nabla^g f, \nabla^g(\partial_t f) \ra_g.
\end{align*}
Furthermore,
$$\partial_t F = \frac{F}{t} + t\left(\partial_t |\nabla^g f|_g^2 - \a \partial^2_t f \right).$$
Therefore,
\begin{align*}
-\a \partial_t (\L f) & = \a \partial_t \left(\frac{F}{t} + (\a - 1) \partial_t f \right) \\
& = \a \left(\frac{\partial_t F}{t} - \frac{F}{t^2} + (\a - 1) \partial^2_t f \right) \\
& = \a \left(\frac{\partial_t F}{t} - \frac{F}{t^2}\right) + (\a - 1) \a \partial^2_t f \\
& = \a \left(\frac{\partial_t F}{t} - \frac{F}{t^2}\right) + (\a - 1) \left(\frac{F}{t^2} - \frac{\partial_t F}{t} + \partial_t|\nabla^g f|_g^2 \right) \\
& = \frac{\partial_t F}{t} - \frac{F}{t^2} + (\a - 1) \partial_t|\nabla^g f|_g^2
\end{align*}
It follows that
\begin{align*}
2\la \nabla^g f, \nabla^g(\L f) \ra_g - \a \partial_t (\L f) & = \frac{1}{t}\left(\partial_t F - 2\la \nabla^g f , \nabla^g F \ra_g - \frac{F}{t} \right) + (\a - 1) \left(\partial_t|\nabla^g f|_g^2 - 2\la \nabla^g f, \nabla^g(\partial_t f) \ra_g \right) \\
& \geq \frac{1}{t}\left(\partial_t F - 2\la \nabla^g f , \nabla^g F \ra_g - \frac{F}{t} \right) - C_3|\nabla^g f|_g^2,
\end{align*}
where we have used the fact
$$\partial_t|\nabla^g f|_g^2 \leq 2\la \nabla^g f, \nabla^g(\partial_t f) \ra_g + \gamma|\nabla^g f|_g^2,$$
for some constant $\gamma = \gamma(\partial_t g) \geq 0$ depending only on $\mathcal{K}$. Inserting the above inequality into the lower bound for $\L F$ yields
$$\L F \geq t\left(||\Hess_g f||^2 + \frac{1}{t}\left(\partial_t F - 2\la \nabla^g f , \nabla^g F \ra_g - \frac{F}{t} \right) - C_4|\nabla^g f|_g^2 - C_2 \right).$$
Now since $\L f = \Delta_g f - \la \nabla^g \phi , \nabla^g f \ra_g$, we have
\begin{align*}
(\Delta_g f)^2 = (\L f + \la \nabla^g \phi , \nabla^g f \ra_g)^2 & = (\L f)^2 + \la \nabla^g \phi , \nabla^g f \ra_g^2 + 2(\L f)\la \nabla^g \phi , \nabla^g f \ra_g \\
& \geq (\L f)^2 + \la \nabla^g \phi , \nabla^g f \ra_g^2 - \frac{(\L f)^2}{2} - 2\la \nabla^g \phi , \nabla^g f \ra_g^2 \\
& = \frac{(\L f)^2}{2} - \la \nabla^g \phi , \nabla^g f \ra_g^2 \\
& \geq \frac{(\L f)^2}{2} - |\nabla^g \phi|_g^2|\nabla^g f|_g^2.
\end{align*}
Therefore, by the arithmetic-geometric mean inequality, we have
$$||\Hess_g f||^2 \geq \frac{1}{n}(\Delta_g f)^2 \geq \frac{(\L f)^2}{2n} - \frac{1}{n}|\nabla^g \phi|_g^2|\nabla^g f|_g^2.$$
Since $|\nabla^g \phi|_g \leq \upperbound$, we obtain
$$\L F \geq \frac{t}{2n}(\L f)^2 + \partial_t F - 2\la \nabla^g f , \nabla^g F \ra_g - \frac{F}{t} - C_5t|\nabla^g f|_g^2 - C_2t.$$
Finally, by \eqref{eqnforf}, and after relabeling constants, we conclude that
$$\L F + 2\la \nabla^g f , \nabla^g F \ra_g - \partial_t F \geq \frac{1}{t}\left[C_1 t^2(|\nabla^g f|_g^2 - \partial_t f)^2 - F - C_2 t^2|\nabla^g f|_g^2 - C_3 t^2 \right].$$

Here the constants $C_1, C_2, C_3 > 0$ depend only on up to fourth order derivatives of the cost function (through $\Hess_g \phi$) and the $C^4_xC^1_t$ norm of the solution $u$ to \eqref{PDE} (through the time derivative of $g$ and bounds on the Ricci curvature of $g$), hence only on $\upperbound$ and on the dimension $n$.

Let $y = |\nabla^g f|_g^2$ and $z = \partial_t f$. Then for any $\a, \e, \delta > 0$, we have the identity
$$(y-z)^2 = \left(\frac{1}{\a} - \frac{\e}{2} \right) (y - \a z)^2 + \left(1 - \frac{\e}{2} - \delta - \frac{1}{\a} \right)y^2 + \left(1 - \a + \frac{\e}{2}\a^2 \right)z^2 + \e y(y-\a z) + \delta y^2.$$
We now choose $\a, \e > 0$ such that
\begin{itemize}
\item[(i)] $1 - \frac{\e}{2} - \frac{1}{\a} > 0$
\item[(ii)] $1 - \a + \frac{\e}{2}\a^2 \geq 0$
\item[(iii)] $\frac{1}{\a} - \frac{\e}{2} > 0.$
\end{itemize}
Note that these conditions impose the restriction $\a>1$. A direct verification shows that $\a = 2$ and $\e = \frac{1}{2}$ satisfy the above inequalities. We then choose $\delta =\frac{1}{8} \in \left(0, 1 - \frac{\e}{2} - \frac{1}{\a} \right) = (0, \frac{1}{4})$. With these choices of $\a, \e, \delta$, we obtain (discarding the second and third terms in the expansion, and using that $F=t(y-\alpha z)$)
$$\L F + 2\la \nabla^g f , \nabla^g F \ra_g - \partial_t F \geq \frac{1}{t}\left[C_1 t^2 \left\{ \frac{F^2}{4t^2}  + y\frac{F}{2t} + \frac{y^2}{8} \right\} - F - C_2t^2 y - C_3 t^2 \right].$$
Using Cauchy's inequality, we may eliminate the $-C_2t^2 y$ and $\dfrac{C_1t^2y^2}{8}$ terms to get
$$\L F + 2\la \nabla^g f , \nabla^g F \ra_g - \partial_t F \geq \frac{1}{t}\left[C_1 t^2 \left\{\frac{F^2}{4t^2}  +  y\frac{F}{2t}\right\} - F - C_4 t^2 \right].$$
Relabeling constants, we have thus established an inequality of the form \eqref{parabolicinequality}.
\end{proof}

\section{Sublinearity of \emph{F} on Domains with Boundary}\label{sec: boundary sublinear}

On a domain with boundary, one must deal with the possibility that $F$ attains a maximum at a point $(x_0, t_0) \in \Par_T = (\overline{\Omega} \times \left\{0 \right\}) \cup (\partial \Omega \times (0,T))$, the parabolic boundary of the cylinder $\mathcal{C}_T$. Since $F = 0$ on $\overline{\Omega} \times \left\{0 \right\}$, it suffices to assume $(x_0,t_0) \in \partial \Omega \times (0,T)$. The original argument of Li and Yau (cf. \cite[proof of Theorem 1.1]{LiYau86}) in the case of the heat equation eliminates the possibility of $F$ attaining a non-negative maximum on $\partial \Omega \times (0,T)$ by means of a contradiction to Hopf's Lemma. For this, they require two additional hypotheses: namely, the solution to the heat equation also satisfies a Neumann boundary condition, and that the boundary is mean-convex. 

We will obtain a similar contradiction to Hopf's Lemma only for the particular non-negative solution $\Theta(x,t) := \sup\limits_{\O} \theta(\cdot, 0) - \theta(x,t)$ of the linearized equation \eqref{linear} (as well as for translations of $\Theta$ in time) by exploiting the boundary condition $D_{\beta} \Theta = 0$ on $\partial \O \times [0,T]$, and using the assumption that the domains $\O$, $\O^*$ are respectively $c$-convex and $c^*$-convex. This gives the desired sublinearity at the boundary of the corresponding function $F$ defined in \eqref{aux} and establishes the Harnack inequality \eqref{harnack} for $\Theta$, which turns out to be sufficient to prove the exponential convergence of $u(\cdot,t)$ to the steady state solution $u^{\infty}(\cdot)$ as $t \rightarrow \infty$ (cf. Section \ref{sec: exponential}). As mentioned in the introduction, it is unclear if such a sublinearity estimate at the boundary holds for an arbitrary non-negative solution $v$ of the linearized equation \eqref{linear}.

Let us carry on with the proof of the sublinearity of $F$ outlined in Proposition \ref{prop: sublinearity if no boundary max}, now assuming there exists $(x_0, t_0) \in \partial \O \times (0,T)$ such that $F(x_0, t_0) > C'_1 + C'_2t_0$ and that $F$ attains a local maximum at $(x_0, t_0)$. It follows from \eqref{parabolicinequality} that, in a spherical cap near $(x_0,t_0)$, we have 
$$\mathcal{L} F + 2 \la \nabla^g f, \nabla^g F \ra_g \geq 0.$$
By the uniform obliqueness of $\beta$ and Hopf's Lemma, it follows that $D_{\beta} F(x_0,t_0) > 0$. Anticipating a contradiction, we proceed to explicitly compute $D_{\beta}F(x_0,t_0)$. We first make a rotation centered at $x_0$ so the directions $e_1, \ldots, e_{n-1}$ form an orthonormal basis for the tangent space to $\partial \O$ at $x_0$, and the direction $e_n$ is the outward pointing unit normal direction to $\partial \O$ at $x_0$. Differentiating $F$ in these coordinates, we find that
\begin{align*}
D_{\beta}F(x_0,t_0) & = D_{\beta}\bigg|_{(x_0,t_0)} t(w^{ij}f_i f_j - \a \partial_t f) \\
& = t_0\left[\left(D_{\beta} w^{ij}\right) f_i f_j + 2 w^{ij} \left(D_{\beta} f_i \right) f_j - \a D_{\beta}(\partial_t f) \right]\bigg|_{(x_0,t_0)} \\
& = t_0\left[- w^{i\ell} w^{jk} \left( D_{\beta} w_{\ell k} \right) f_i f_j  + 2 w^{ij}  \left(\left(D_{\beta} f\right)_i - \beta^k_i f_k\right) f_j - \a  \left(\partial_t \left(D_{\beta} f\right)- (\partial_t \beta^k) f_k\right) \right]\bigg|_{(x_0,t_0)}.
\end{align*}
Now since $D_{\b}f =\frac{D_\beta v}{v}= 0$ on $\bO$, we have $\partial_t \left(D_{\beta} f\right) = 0$ and $\left(D_{\beta} f\right)_i  = 0$ for $i = 1, \ldots, n-1$. Therefore,
$$D_{\beta}F(x_0,t_0) = t_0\left[- w^{i\ell} w^{jk} \left( D_{\beta} w_{\ell k} \right) f_i f_j  - 2 w^{ij}   \beta^k_i f_k f_j  + 2w^{nj}f_j(D_{\b} f)_n + \a (\partial_t \beta^k) f_k \right]\bigg|_{(x_0,t_0)}.$$
We claim $w^{nj}f_j  = 0$ at $(x_0,t_0)$. By Lemma \ref{Wbetaisnormal}, $W\beta$ is parallel to the outward pointing unit normal vector $\nu$ on $\partial \O$, so $\nu = \frac{1}{\chi} W\b$, where $\chi := |W\b|$. Again since $D_{\b}f = 0$ on $\partial \O$,
$$0 = \la \b, \nabla f \ra = \la W^{-1} W\b, \nabla f \ra = \la W\b, W^{-1} \nabla f \ra.$$
Hence, 
\begin{align}\label{def of tau}
\tau := W^{-1} \nabla f
\end{align}
 is tangent to $\partial \O$. In the coordinate system defined above, we have $\nu(x_0,t_0) = e_n$, and so $\t_n(x_0,t_0) = 0$. Since $\tau^n = w^{nj}f_j$, the claim is proved. 
It follows that
\begin{equation}\label{DbetaFprelim}
D_{\beta}F(x_0,t_0) = t_0\left[-\left( D_{\beta} w_{k \ell} \right) \t^k \t^{\ell}  - 2 \beta^k_i f_k \tau^i + \a (\partial_t \beta^k)  f_k \right]\bigg|_{(x_0,t_0)}.
\end{equation}
Note that since $\t_n = 0$ at $(x_0,t_0)$, it suffices to sum the indices in the first term over $k, \ell = 1, \ldots, n-1$.

\subsection{Inner product cost} We first show how to explicitly compute $D_{\beta}F(x_0,t_0)$ in the case when the cost function is given by the Euclidean inner product on $\R^n$ (which is known to be equivalent to taking the cost function to be the Euclidean distance squared). There are a number of simplifications in this case, as $Y(x, p)=p$, $W(x, t)=D^2u(x, t)$, and $c$- and $c^*$-convexity of sets and functions reduce to the usual notions of convexity of the domains $\O$ and $\O^*$.

\begin{Proposition}\label{DbetaFquadcost}
If $c(x, y)=\la x, y\ra$
\begin{equation}\label{DbetaFforquadcost}
D_{\beta}F(x_0,t_0) = t_0\left[-\chi  \la (D \nu) \tau, \tau\ra - \la D^2\hstar(\nabla u) \nabla f, \nabla f \ra + \a \la D^2 \hstar(\nabla u) \nabla \theta, \nabla f \ra \right]\bigg|_{(x_0,t_0)}.
\end{equation}
\end{Proposition}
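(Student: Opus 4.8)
\emph{Proof proposal.} The plan is to start from the expression \eqref{DbetaFprelim} already derived for a general cost and simply substitute the simplifications valid for $c(x,y)=\la x,y\ra$, treating the three terms in the bracket one at a time. Here $Y(x,p)=p$, hence $A\equiv 0$, $W=D^2u$, $\G(x,p)=\hstar(p)$, and therefore $\beta(x,t)=\nabla\hstar(\nabla u(x,t))$; differentiating this gives $\beta^k_i=\hstar_{km}(\nabla u)\,w_{mi}$ and $\partial_t\beta^k=\hstar_{km}(\nabla u)\,\theta_m$. Since $\t=W^{-1}\nabla f$ by \eqref{def of tau}, one has $W\t=\nabla f$, i.e.\ $w_{mi}\t^i=f_m$. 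Plugging these in, the second bracket term of \eqref{DbetaFprelim} becomes $-2\beta^k_i f_k\t^i=-2\la D^2\hstar(\nabla u)\nabla f,\nabla f\ra$ and the third becomes $\a(\partial_t\beta^k)f_k=\a\la D^2\hstar(\nabla u)\nabla\theta,\nabla f\ra$; these are the two last terms of \eqref{DbetaFforquadcost} (up to combining with a contribution from the first term), and their derivation is routine chain rule.

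The real content lies in the first bracket term $-(D_\beta w_{k\ell})\t^k\t^\ell$, which must account for both $-\chi\la(D\nu)\t,\t\ra$ and an extra $+\la D^2\hstar(\nabla u)\nabla f,\nabla f\ra$. The idea is to reinterpret the boundary condition $\hstar(\nabla u(\cdot,t_0))=0$ on $\bO$ geometrically: set $\Psi(x):=\hstar(\nabla u(x,t_0))$ on $\overline\O$. A direct computation gives $\nabla\Psi=W\beta$, so by Lemma \ref{Wbetaisnormal} we have $\Psi=0$ on $\bO$ with $\nabla\Psi=\chi\nu$ there and $\chi=|W\beta|>0$; thus $\Psi$ is a boundary defining function for $\O$ and $\nu=\nabla\Psi/|\nabla\Psi|$ on $\bO$. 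The standard identity for the second fundamental form of a level set then gives, for $\t$ tangent to $\bO$ at $x_0$ (which $\t$ is, by the discussion preceding \eqref{DbetaFprelim}), $\chi\la(D\nu)\t,\t\ra=D^2\Psi(\t,\t)$. Expanding $D^2\Psi(\t,\t)$ via the chain rule produces the term $\hstar_{mp}(\nabla u)(w_{p\ell}\t^\ell)(w_{mk}\t^k)=\la D^2\hstar(\nabla u)\nabla f,\nabla f\ra$ together with $\hstar_m(\nabla u)(\partial_\ell w_{mk})\t^k\t^\ell$. The crucial observation is that because $W=D^2u$, the third derivatives $\partial_\ell w_{mk}=u_{mk\ell}$ are fully symmetric, so $\hstar_m(\nabla u)(\partial_\ell w_{mk})\t^k\t^\ell=\beta^i(\partial_i w_{k\ell})\t^k\t^\ell=(D_\beta w_{k\ell})\t^k\t^\ell$. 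Rearranging, $(D_\beta w_{k\ell})\t^k\t^\ell=\chi\la(D\nu)\t,\t\ra-\la D^2\hstar(\nabla u)\nabla f,\nabla f\ra$.

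Substituting the three computed bracket terms into \eqref{DbetaFprelim} and collecting the two $\la D^2\hstar(\nabla u)\nabla f,\nabla f\ra$ contributions yields exactly \eqref{DbetaFforquadcost}. I expect the main obstacle to be this first term: the whole point is the observation that $\nabla\Psi=W\beta$ makes $\Psi$ a defining function for $\O$, which is what transfers information about the evolving operator $W$ to the fixed geometry of $\bO$ and generates the second-fundamental-form term, and the full symmetry of $u_{ijk}$ — a feature special to the inner-product cost — is precisely what lets the leftover third-order term be reassembled into $D_\beta w_{k\ell}$. Everything else is bookkeeping with the chain rule and the identity $W\t=\nabla f$.
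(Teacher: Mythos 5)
Your proposal is correct and is essentially the paper's own computation: both arguments hinge on $W\beta=\chi\nu$ on $\partial\Omega$ (Lemma \ref{Wbetaisnormal}), tangential differentiation along $\partial\Omega$, the identity $W\tau=\nabla f$, and the symmetry of the third derivatives of $u$, and both arrive at the same decomposition $-(D_\beta w_{k\ell})\tau^k\tau^\ell=-\chi\la (D\nu)\tau,\tau\ra+\la D^2\hstar(\nabla u)\nabla f,\nabla f\ra$ before combining with the other two terms of \eqref{DbetaFprelim}. Your only departure is organizational: you package the tangential differentiation of $\nu=\frac{1}{\chi}W\beta$ as the level-set second-fundamental-form identity for the defining function $\Psi=\hstar(\nabla u(\cdot,t_0))$, whose gradient is $W\beta$, which hides the $(\log\chi)_\ell$ terms that the paper instead cancels directly using $\la\nu,\tau\ra=0$.
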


\begin{proof}
We have
$$W = D^2 u, \qquad \beta = \nabla \hstar(\nabla u).$$
Consequently, $\nu = \frac{1}{\chi}(D^2u) \b$. Differentiating $\nu^k$ in the $e_{\ell}$ direction for $k, \ell = 1, \ldots, n-1$, we find
\begin{align*}
\nu^k_{\ell} & = \left(\frac{1}{\chi} u_{kr} \b^r \right)_{\ell} \\
& = \frac{1}{\chi}\left(u_{\ell kr} \b^r + u_{kr} \b^r_{\ell} \right) - \frac{\chi_{\ell}}{\chi^2} \left(u_{kr} \b^r \right) \\
& = \frac{1}{\chi}\left(D_{\b}u_{\ell k}+ u_{kr} \b^r_{\ell} \right) - (\log \chi) _{\ell} \nu^k.
\end{align*}
Solving for $D_{\b}u_{\ell k}$, we obtain
$$D_{\b}u_{\ell k} = \chi \nu^k_{\ell} - u_{kr} \b^r_{\ell} + \chi (\log \chi)_{\ell} \nu^k.$$
Therefore at $(x_0,t_0)$, we have (recall \eqref{def of tau})
\begin{align*}
-\left( D_{\beta} u_{\ell k} \right) \tau^{\ell}\tau^{k} & = -\left(\chi \nu^k_{\ell} - u_{kr} \b^r_{\ell} + \chi  (\log \chi)_{\ell} \nu^k \right) \tau^{\ell}\tau^{k} \\
& = -\chi \nu^k_{\ell} \tau^{\ell}\tau^{k} + u_{kr} \tau^{k} \b^r_{\ell}\tau^{\ell} \\
& = -\chi  \nu^k_{\ell} \tau^{\ell}\tau^{k} + f_r \b^r_{\ell}\tau^{\ell}
\end{align*}
where we sum the indices $k, \ell$ from $1$ to $n-1$. Substituting this into \eqref{DbetaFprelim} gives
$$D_{\beta}F(x_0,t_0) = t_0\left[-\chi \nu^k_{\ell} \tau^{\ell}\tau^{k} - \beta^k_i f_k \tau^i + \a (\partial_t \beta^k) f_k \right]\bigg|_{(x_0,t_0)}.$$
Since $\beta(x,t) = \nabla\hstar(\nabla u(x,t))$, we find that
$$\beta^k_i f_k \tau^i = \hstar_{k \ell}(\nabla u) u_{\ell i} f_k \t^i = \hstar_{k \ell}(\nabla u) f_k u_{\ell i} \t^i = \hstar_{k \ell}(\nabla u) f_k f_{\ell},$$
and
$$(\partial_t \beta^k)  f_k = \hstar_{k \ell}(\nabla u) (\partial_t u_{\ell}) f_k = \hstar_{k \ell}(\nabla u) \theta_{\ell} f_k,$$
hence \eqref{DbetaFforquadcost} follows.
\end{proof}

\subsection{General Cost} We now show how to explicitly compute $D_{\beta}F(x_0,t_0)$ in the case of a general cost.

\begin{Proposition}\label{DbetaFexpression}
\begin{equation}\label{DbetaFgencost3}
D_{\beta}F(x_0,t_0) = t_0\left[-\chi \left(\nu^j_i  - c^{r,\ell} c_{ij,r}\nu^{\ell}\right) \tau^{i}\tau^{j} - \G_{p_k p_s}(x,\nabla u) f_k  f_s + \a \ \G_{p_k p_s}(x,\nabla u) f_k  \theta_s \right]\bigg|_{(x_0,t_0)}.
\end{equation}
\end{Proposition}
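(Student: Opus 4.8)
The plan is to mirror the computation behind Proposition \ref{DbetaFquadcost}, starting from the general identity \eqref{DbetaFprelim} and processing its three pieces $-(D_\beta w_{k\ell})\tau^k\tau^\ell$, $-2\beta^k_i f_k\tau^i$, and $\a(\partial_t\beta^k)f_k$ in turn, now carrying along the extra terms that arise because $W\neq D^2u$ and because $\beta$ depends on $x$ both explicitly and through $\nabla u$. The last piece is immediate: since $\beta^k(x,t)=\G_{p_k}(x,\nabla u(x,t))$ and $\partial_t(\nabla u)=\nabla\theta$, the chain rule gives $\partial_t\beta^k=\G_{p_kp_s}(x,\nabla u)\,\theta_s$, which already produces the final term of \eqref{DbetaFgencost3}.

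For the leading term, write $w_{k\ell}=u_{k\ell}-A_{k\ell}(x,\nabla u)$. I would exploit the full symmetry of the third spatial derivatives of $u$ to rewrite $\beta^r u_{k\ell r}=\beta^r u_{kr\ell}=\partial_{x_\ell}(u_{kr}\beta^r)-u_{kr}\beta^r_\ell$, and then use the boundary identity $w_{kr}\beta^r=\chi\nu^k$ from Lemma \ref{Wbetaisnormal} to replace $u_{kr}\beta^r$ by $\chi\nu^k+A_{kr}(x,\nabla u)\beta^r$. Differentiating this relation in a direction $e_\ell$ tangent to $\partial\O$ (so $\ell\le n-1$) and contracting against $\tau^k$, the $\chi_\ell\nu^k$ contribution drops since $\nu(x_0)=e_n\perp\tau$, as noted before \eqref{DbetaFprelim}. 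All resulting terms of the form $A_{kr}\beta^r_\ell\tau^k\tau^\ell$ cancel against one another, and using $w_{kr}\tau^k=f_r$ (from $\tau=W^{-1}\nabla f$) to write $u_{kr}\tau^k=f_r+A_{kr}\tau^k$, one is left with
\[
-(D_\beta w_{k\ell})\tau^k\tau^\ell = -\chi\,\nu^k_\ell\,\tau^k\tau^\ell + \beta^r\bigl(\partial_{x_r}[A_{k\ell}(x,\nabla u)]-\partial_{x_\ell}[A_{kr}(x,\nabla u)]\bigr)\tau^k\tau^\ell + f_r\beta^r_\ell\tau^\ell.
\]
Because $A_{k\ell}(x,\nabla u)=c_{k\ell}(x,T)$ and the pure third $x$-derivatives of $c$ are symmetric, the antisymmetrized bracket only retains the mixed terms $c_{k\ell,m}T^m_{x_r}-c_{kr,m}T^m_{x_\ell}$; plugging in $T^m_{x_r}=c^{m,s}(x,T)w_{sr}$ from \eqref{derivative of T} and then using $w_{sr}\beta^r=\chi\nu^s$ and $w_{s\ell}\tau^\ell=f_s$, this splits as the desired $\chi\,c^{r,\ell}c_{ij,r}\nu^\ell\tau^i\tau^j$ plus an ``error'' term $\mathcal{A}_2$ built from $\beta$, $f$, $\tau$, and $c^{\cdot,\cdot}c_{\cdot\cdot,\cdot}$.

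It then remains to process $f_r\beta^r_\ell\tau^\ell-2\beta^k_if_k\tau^i$. Expanding $\beta^k_i=\partial_{x_i}[\G_{p_k}(x,\nabla u)]=\G_{p_kx_i}+\G_{p_kp_m}u_{mi}$ (and likewise $\beta^r_\ell$) and again using $u_{mi}\tau^i=f_m+A_{mi}\tau^i$, this combination collapses to $-\G_{p_kp_s}f_kf_s$ — the second term of \eqref{DbetaFgencost3} — plus the error $-f_k(\G_{p_kx_i}+\G_{p_kp_m}A_{mi})\tau^i$. The final step is to check that this error exactly cancels $\mathcal{A}_2$. That follows from the identity $\G_{p_kx_i}+\G_{p_kp_m}A_{mi}=-\beta^m(A_{mi})_{p_k}$, which is obtained by differentiating $\G=\hstar\circ Y$ twice and invoking the implicit-function relations $Y^a_{p_k}=c^{a,k}$ and $Y^a_{x_i}=-Y^a_{p_j}A_{ji}$ together with $A_{mi}(x,p)=c_{mi}(x,Y(x,p))$ from the Preliminaries; substituting $(A_{mi})_{p_k}=c_{mi,a}c^{a,k}$ and using the symmetry of $c_{mi,a}$ in $m,i$ matches it term-by-term (with opposite sign) with $\mathcal{A}_2$. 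Collecting everything yields \eqref{DbetaFgencost3}.

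The main obstacle is precisely this last cancellation of the two error terms: the rest is a lengthy but mechanical expansion, whereas making $\mathcal{A}_2$ and the $\G$-error annihilate requires chaining the implicit-function identities for $Y$, $A$, and $c$ in the right order and keeping the index bookkeeping straight. A secondary point to be careful about is that tangential differentiation of $W\beta=\chi\nu$ is legitimate only in the directions $e_1,\dots,e_{n-1}$, which is exactly why all contractions throughout are taken against the boundary-tangent vector $\tau$ and why $\nu^k\tau^k=0$ may be used freely at $(x_0,t_0)$.
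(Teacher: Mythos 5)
Your proposal is correct and follows essentially the same route as the paper: starting from \eqref{DbetaFprelim}, it uses the symmetry of the third derivatives of $u$ together with $W\beta=\chi\nu$ (Lemma \ref{Wbetaisnormal}) and $D_xT=(D^2_{x,y}c)^{-1}W$ to produce the $c$-convexity term, and expands $\beta^k_i$ via implicit differentiation of $Y$ so that the residual mixed third-derivative terms of $c$ cancel and the remainder assembles into $\G_{p_kp_s}$, with $\partial_t\beta^k=\G_{p_kp_s}\theta_s$ handled by the chain rule exactly as in the paper. Your identity $\G_{p_kx_i}+\G_{p_kp_m}A_{mi}=-\beta^m(A_{mi})_{p_k}$ is a tidy repackaging of the same cancellation the paper carries out directly with the $Y^{\ell}_{p_kx_i}$ and $Y^{\ell}_{p_kp_s}$ formulas.
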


\begin{proof} We have
$$w_{jk}(x,t) = u_{jk}(x,t) - c_{jk}(x, T(x,t)), \qquad \beta^k(x,t) = \hstar_{\ell}(Y(x,\nabla u(x,t))) Y^{\ell}_{p_k}(x,\nabla u(x,t)).$$
Recall that $\nu = \frac{1}{\chi}W\beta$. As in the case of the inner product cost, we differentiate $\nu^j$ in the $e_i$ direction for $i,j = 1, \ldots, n-1$ to get
\begin{align*}
\nu^j_i & = \left(\frac{1}{\chi} w_{jk} \b^k \right)_i \\
& = \frac{1}{\chi}\left((w_{jk})_i \b^k + w_{jk}\b^k_i \right) - \frac{\chi_{i}}{\chi^2} \left(w_{jk} \b^k \right) \\
& = \frac{1}{\chi}\left((w_{jk})_i \b^k + w_{jk}\b^k_i \right)  - (\log \chi)_i \nu^j.
\end{align*}
Differentiating $w_{jk}$ gives
\begin{align*}
\left(w_{jk} \right)_i & = u_{jki} - c_{jki} - c_{jk,r}T^r_i \\
& = \left(w_{ij} \right)_k+ c_{ij,r}T^r_k - c_{jk,r}T^r_i \\
& = \left(w_{ij} \right)_k + c_{ij,r}c^{r,\ell}w_{\ell k} - c_{jk,r}c^{r,\ell}w_{\ell i}.
\end{align*}
where we have used \eqref{derivative of T} in the final line. Therefore,
\begin{align*}
\nu^j_i & = \frac{1}{\chi}\left((w_{jk})_i \b^k + w_{jk}\b^k_i \right) - (\log \chi)_i \nu^j \\
& = \frac{1}{\chi}\left(\left[\left(w_{ij} \right)_k + c_{ij,r}c^{r,\ell}w_{\ell k} - c_{jk,r}c^{r,\ell}w_{\ell i}  \right] \b^k + w_{jk}\b^k_i \right) - (\log \chi)_i \nu^j\\
& = \frac{1}{\chi}\left(D_{\b}w_{ij}+ \left[c_{ij,r}c^{r,\ell}w_{\ell k} - c_{jk,r}c^{r,\ell}w_{\ell i}  \right] \b^k + w_{jk}\b^k_i \right) -(\log \chi)_i \nu^j.
\end{align*}
Solving for $D_{\b}w_{ij}$, we obtain
$$D_{\b}w_{ij} = \chi \nu^j_i - \left[c_{ij,r}c^{r,\ell}w_{\ell k} - c_{jk,r}c^{r,\ell}w_{\ell i}  \right] \b^k - w_{jk}\b^k_i + \chi (\log \chi)_i \nu^j.$$
Therefore, at $(x_0,t_0)$, we have (again using \eqref{def of tau})
\begin{align*}
-\left( D_{\beta} w_{ij} \right) \tau^i\tau^j & = -\left( \chi \nu^j_i - \left[c_{ij,r}c^{r,\ell}w_{\ell k} - c_{jk,r}c^{r,\ell}w_{\ell i}  \right] \b^k - w_{jk}\b^k_i + \chi \partial_i(\log \chi) \nu^j \right) \t^i \t^j \\
& = -\left(\chi \nu^j_i  - c_{ij,r}c^{r,\ell}w_{\ell k} \b^k + c_{jk,r}c^{r,\ell}w_{\ell i} \b^k \right)\tau^{i}\tau^{j} + w_{jk}\b^k_i \tau^{i}\tau^{j} \\
& = -\chi \left(\nu^j_i - c^{r,\ell} c_{ij,r}\nu^{\ell}\right) \tau^{i}\tau^{j} - c^{r,\ell} c_{jk,r}f_{\ell} \b^k \tau^{j} + f_k \b^k_i \tau^{i} \\
& = -\chi \left(\nu^j_i  - c^{r,\ell} c_{ij,r}\nu^{\ell}\right) \tau^{i}\tau^{j} - c_{jk,r}\hstar_s  Y^s_{p_k} Y^r_{p_{\ell}} f_{\ell} \tau^{j} + f_k \b^k_i \tau^{i}
\end{align*}
where we sum the indices $i,j$ from $1$ to $n-1$. It follows from \eqref{DbetaFprelim} that 
\begin{equation}\label{DbetaFgencost1}
D_{\beta}F(x_0,t_0) = t_0\left[-\chi \left(\nu^j_i - c^{r,\ell} c_{ij,r}\nu^{\ell}\right) \tau^{i}\tau^{j} - c_{jk,r}\hstar_s  Y^s_{p_k} Y^r_{p_{\ell}}  f_{\ell} \tau^{j} - f_k \b^k_i \tau^{i} + \a \partial_t \beta^k f_k \right]\bigg|_{(x_0,t_0)}.
\end{equation}
We compute
\begin{align}\label{derivative of beta}
\beta^k_i  = \hstar_{\ell r}\left(Y^r_{x_i} + Y^r_{p_s} u_{si}\right)Y^{\ell}_{p_k} + \hstar_{\ell} \left(Y^{\ell}_{p_k x_i} + Y^{\ell}_{p_k p_s} u_{si}\right).
\end{align}
To simplify the first term, recall the identity (see \eqref{derivative of T})
$$Y^r_{x_i} + Y^r_{p_s} u_{si} = Y^r_{p_s} w_{si}.$$
For the second term in \eqref{derivative of beta}, we differentiate the equation $c_{i,\ell} Y^{\ell}_{p_k} = \delta_{ik}$ with respect to $p_s$ and $x_i$ to obtain
$$Y^{\ell}_{p_kp_s} = -c^{\ell,j} c_{j,rq} Y^r_{p_k} Y^q_{p_s}$$
and
$$Y^{\ell}_{p_k x_i} =  -c^{\ell,j} c_{ij,r}Y^r_{p_k} + c^{\ell,j} c_{j, rq} Y^r_{p_k} Y^q_{p_s} c_{si} = -c_{ij,r}Y^{\ell}_{p_j} Y^r_{p_k}  - Y^{\ell}_{p_kp_s} c_{si}.$$
Therefore,
$$Y^{\ell}_{p_k x_i} + Y^{\ell}_{p_kp_s} u_{si} =  - c_{ij,r}Y^{\ell}_{p_j} Y^r_{p_k} + w_{si} Y^{\ell}_{p_kp_s}.$$
Substituting these into the expression \eqref{derivative of beta} gives
$$\beta^k_i = \hstar_{\ell r} Y^{\ell}_{p_k} Y^r_{p_s} w_{si}+ \hstar_{\ell} \left( - c_{ij,r}Y^{\ell}_{p_j} Y^r_{p_k}+ w_{si} Y^{\ell}_{p_kp_s} \right) $$
Therefore,
\begin{align*}
 f_k \b^k_i\tau^{i} & =\hstar_{\ell r} Y^{\ell}_{p_k} Y^r_{p_s} f_k w_{si} \tau^{i} - c_{ij,r}\hstar_{\ell}Y^{\ell}_{p_j} Y^r_{p_k} f_k \t^i +  \hstar_{\ell} Y^{\ell}_{p_kp_s} f_k w_{si} \t^i \\
& = \hstar_{\ell r} Y^{\ell}_{p_k} Y^r_{p_s} f_k f_s - c_{ij,r}\hstar_{\ell}Y^{\ell}_{p_j} Y^r_{p_k} f_k \t^i  + \hstar_{\ell} Y^{\ell}_{p_kp_s} f_k f_s.
\end{align*}
Substituting into \eqref{DbetaFgencost1} and observing that the second term in the above expression cancels the term $-c_{jk,r}\hstar_s  Y^r_{p_{\ell}} Y^s_{p_k} f_{\ell} \tau^{j}$ in \eqref{DbetaFgencost1}, we obtain
$$D_{\beta}F(x_0,t_0) =  t_0\left[-\chi \left(\nu^j_i  - c^{r,\ell} c_{ij,r}\nu^{\ell}\right) \tau^{i}\tau^{j} - \left(\hstar_{\ell r} Y^{\ell}_{p_k} Y^r_{p_s} +  \hstar_{\ell} Y^{\ell}_{p_kp_s}\right)f_k f_s + \a \beta^k_t f_k \right]\bigg|_{(x_0,t_0)}.$$
Next, we compute
$$\partial_t \beta^k  = \left(\hstar_{\ell r} Y^{\ell}_{p_k} Y^r_{p_s} + \hstar_{\ell} Y^{\ell}_{p_k p_s} \right) \theta_s.$$
Finally, noticing that $\hstar_{\ell r} Y^{\ell}_{p_k} Y^r_{p_s} + \hstar_{\ell} Y^{\ell}_{p_k p_s} = \G_{p_k p_s}$, we obtain the claimed expression \eqref{DbetaFgencost3}.
\end{proof}

\section{Proof of Exponential Convergence}\label{sec: exponential}
With Propositions \ref{DbetaFquadcost} and \ref{DbetaFexpression} in hand, we may now prove our main result.
\begin{proof}[Proof of Theorem \ref{thm: main}]
Consider the function 
$$\Theta(x,t) = \sup\limits_{\Omega} \theta(\cdot, 0)  - \theta(x, t)$$
which satisfies \eqref{linear}, and is non-negative by Proposition \ref{maxprinciple}. We claim $D_{\beta}F(x_0,t_0) \leq 0$ when $v = \Theta$, which will contradict Hopf's Lemma, thus proving $F$ cannot attain a positive maximum on $\partial \Omega\times (0, T)$.

Let us first deal with the case of the inner product cost. Since the domain $\O$ is convex, we have $\la (D \nu) \t, \t \ra \geq 0$. Therefore, since $\chi \geq 0$, we obtain using Proposition \ref{DbetaFquadcost}
\begin{equation}\label{DbetaF}
D_{\beta}F(x_0,t_0) \leq t_0\left[- \la D^2\hstar(\nabla u) \nabla f, \nabla f \ra + \a \la D^2 \hstar(\nabla u) \nabla \theta, \nabla f \ra \right]\bigg|_{(x_0,t_0)}.
\end{equation}
Next, the convexity of $\O^*$ implies $D^2\hstar$ is non-negative, so by substituting for $f = \log \Theta$ in \eqref{DbetaF}, we find
$$D_{\beta}F(x_0,t_0) \leq t_0\left[- \frac{1}{\Theta^2}\la D^2\hstar(\nabla u) \nabla \theta, \nabla \theta \ra - \frac{\a}{\Theta} \la D^2 \hstar(\nabla u) \nabla \theta, \nabla \theta \ra \right]\bigg|_{(x_0,t_0)} \leq 0.$$
This is the desired contradiction to Hopf's Lemma. For general costs, we use Proposition \ref{DbetaFexpression}, noticing that $c$-convexity of $\O$ with respect to $\O^*$ \eqref{cconvexity} implies $\left( \nu^j_i - c^{r,\ell} c_{ij,r}\nu^{\ell}\right) \tau^{i}\tau^{j} \geq 0$, while the $c^*$-convexity of $\O^*$ with respect to $\O$ \eqref{cstarconvexity} implies $\G_{p_k p_s}$ is a non-negative matrix.

It follows from Proposition \ref{prop: sublinearity if no boundary max} that with the choice $v = \Theta$, the corresponding function $F$ defined in \eqref{aux} is sublinear in time, and consequently the Harnack inequality \eqref{harnack} holds for $\Theta$. Using this Harnack inequality, we now prove exponential convergence of $\theta(\cdot, t)$. The argument is similar to \cite[Section 7]{KimStreetsWarren12}, but differs in an essential manner. For each integer $k \geq 1$, consider the functions
$$\Theta_k(x,t) := \sup\limits_{\Omega} \theta(\cdot, k-1)  - \theta(x, (k-1) + t).$$
The functions $\Theta_k$ are non-negative by Proposition \ref{maxprinciple} and solve \eqref{linear}. Arguing as above, the corresponding functions $F$ for $v = \Theta_k$ are also sublinear in $t$ (with constants independent of $k$) and thus the Harnack inequality \eqref{harnack} holds for $\Theta_k$. Applying \eqref{harnack} to $\Theta_k$ at $t = 1$ yields
\begin{equation}\label{upper}
\sup\limits_{\Omega} \theta(\cdot, k-1)  - \inf\limits_{\Omega}\theta (\cdot, k) \leq C\left( \sup\limits_{\Omega} \theta(\cdot, k-1)  - \sup\limits_{\Omega}  \theta\left(\cdot, k + 1\right)  \right)
\end{equation}
Now by \eqref{supandinf}, we know $\inf\limits_{\Omega}\theta\left(\cdot, k \right) \leq 0$ for each $k$. Therefore, defining $\e := \frac{C-1}{C} < 1$, we find
$$\sup\limits_{\Omega} \theta(\cdot, k+1) \leq \e \sup\limits_{\Omega} \theta(\cdot, k-1).$$
Iterating this inequality gives the exponential decay of the supremum
\begin{equation}\label{expconvofsup}
\sup\limits_{\Omega} \theta(\cdot, t) \leq \sup\limits_{\Omega} \theta(\cdot, 0)  e^{-\sigma t}, \text{ where } e^{-\sigma} = \e.
\end{equation}
On the other hand, \eqref{upper} implies
$$\inf\limits_{\Omega}\theta\left(\cdot, k \right)  \geq - (C-1)\sup\limits_{\Omega} \theta(\cdot, k-1) + C \sup\limits_{\Omega}  \theta(\cdot, k + 1) \geq - (C-1)\sup\limits_{\Omega} \theta(\cdot, k-1),$$
where we have used \eqref{supandinf} again to throw away the term $\sup\limits_{\Omega} \theta(\cdot, k + 1)$. Therefore, by \eqref{expconvofsup}, we obtain
\begin{equation}\label{expconvofinf}
\inf\limits_{\Omega}\theta\left(\cdot, k \right)  \geq - (C-1)\sup\limits_{\Omega} \theta(\cdot, 0) e^{-\sigma(k-1)}.
\end{equation}
This implies the exponential convergence of $\inf\limits_{\Omega} \theta(\cdot, t)$, which combined with \eqref{expconvofsup} gives the desired exponential convergence of $\theta(\cdot, t)$ to zero.
\end{proof}
\section{A Geometric Approach to Sublinearity at the Boundary}\label{sec: kim mccann}

In this section, we present an alternative approach to the computation of $D_\beta F(x_0, t_0)$ arising in the boundary sublinearity above. We will accomplish this using geometric language, exploiting the pseudo-Riemannian framework for optimal transport developed by Kim and McCann in \cite{KimMcCann10}.

In order to stay in line with established conventions, in this section we will mostly follow the notation used in \cite{KimMcCann10}. Thus in this section only, we will refer to the source and target domains as $\Omega$ and $\Omegabar$ respectively (in particular, $\Omegabar$ does not denote the closure of a set) which we assume are subsets of some fixed Riemannian manifolds. Points with a bar above will belong to $\Omegabar$, while those without will belong to $\Omega$. We also adopt the Einstein summation convention with the caveat that any indices given by greek letters will run from $1$ to $2n$, while lower case roman indices run between $1$ and $n$ with the convention that an index with a bar above will be that value with $n$ added to it: in otherwords, $1\leq \gamma\leq 2n$, $1\leq i \leq n$ and $\bar i:=i+n$. 

Additionally, we will switch sign conventions at this point to stay in line with the definitions of \cite{KimMcCann10}. This means that $c$ will be replaced by $-c$ everywhere, and the optimal transport problem \eqref{OT} that is considered will be a minimization instead of a maximization problem.

We also split the tangent and cotangent spaces of $\Omega\times \Omegabar$ in the canonical way according to the product structure, which gives the splitting $dc=Dc\oplus \Dbar c$ of the one form $dc$ on $\Omega\times \Omegabar$, and given any local coordinate system on $\Omega\times \Omegabar$ we will use the notation $X$ to denote the full $2n$ dimensional coordinate variable: thus given a point $X=(x, \xbar)\in \Omega\times \Omegabar$,  $X^i$ will indicate the $i$th coordinate of $x$ with $1\leq i\leq n$, and $X^{\bar i}$ will indicate the $i$th coordinate of $\bar x$. We will also suppress the time variable in this section, as everything considered will be for a fixed time $t$ (in fact, the time dependency of the potential $u$ will be completely irrelevant in the results of this section). Finally, we use the notation 
$$ [\Omega]_{\xbar}:=-\Dbar c(\Omega, \xbar)\subset T^*_{\xbar}\Omegabar, \qquad [\Omegabar]_{x}:=-Dc(x, \Omegabar)\subset T^*_x\Omega, \qquad \text{for any } (x, \xbar)\in\Omega\times\Omegabar.$$

Equip $\Omega$ with the pullback metric $w:=(\Id \times T)^*h$, where 
\begin{align*}
h:=\frac{1}{2}
\begin{pmatrix}
 0&-\Dbar Dc\\-D\Dbar c&0
\end{pmatrix}
\end{align*}
is the Kim-McCann (pseudo-Riemannian) metric on $\Omega\times \Omegabar$ defined as in \cite[(2.1)]{KimMcCann10}.
By \cite[Section 3.2]{KimMcCannWarren10}, in Euclidean coordinates the coefficients of $w$ at $x$ are exactly $w_{ij}(x)=u_{ij}(x)+c_{ij}(x, T(x))$, and $w$ is a Riemannian metric. We will write $\nabla^w$ and $\nabla^{h}$ for the Levi-Civita connections of $w$ and $h$ respectively, $\Gamma$ for the Christoffel symbols of $h$, and $\norm{\cdot}_w$ for the length of a vector in $w$. We will also metrically identify various cotangent spaces naturally with $\R^n$ through the underlying Riemannian metrics on $\Omega$ or $\Omegabar$. The inner products and norms in these underlying metrics will be denoted by $\la \cdot, \cdot \ra$ and $\norm{\cdot}$ respectively. Our main result of the section is the following.
\begin{Theorem}\label{thm: second fundamental forms}
Let $\II^w$ be the second fundamental form of $\partial \Omega$ defined with respect to the metric $w$, and fix a point $x_0\in \partial \Omega$. If $\II^{\partial\coord{\Omega}{T(x_0)}}$, $\II^{\partial\coord{\Omegabar}{x_0}}$ are the (Euclidean) second fundamental forms of $\partial\coord{\Omega}{T(x_0)}$ and $\partial\coord{\Omegabar}{x_0}$ respectively, then for any $\tau_1$, $\tau_2\in T_{x_0}\partial \Omega$ we have:
\begin{align}\label{eqn: II equality}
2\norm{\beta(x_0)}_w \II^w_{x_0}(\tau_1, \tau_2)=\norm{DT(x_0)\beta(x_0)}\II^{\partial\coord{\Omega}{T(x_0)}}_{-\Dbar c(x_0, T(x_0))}(\hat{\tau}_1, \hat{\tau}_2)+\norm{\beta(x_0)}\II^{\partial\coord{\Omegabar}{x_0}}_{-D c(x_0, T(x_0))}(\hat{\overline{\tau}}_1, \hat{\overline{\tau}}_2)
\end{align}
where 
\begin{align*}
\hat{\tau}_i:&=-D\Dbar c(x_0, T(x_0))\tau_i\in T^*_{T(x_0)}\Omegabar,\\
 \hat{\overline{\tau}}_i:&=-\Dbar D c(x_0, T(x_0))DT(x_0)\tau_i\in T^*_{x_0}\Omega.
\end{align*}
\end{Theorem}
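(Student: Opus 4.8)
The plan is to compute both sides of \eqref{eqn: II equality} directly in Euclidean coordinates and verify they agree; the statement is fundamentally a coordinate identity about how the three second fundamental forms (one with respect to $w$ on $\partial\Omega$, two Euclidean ones on the images $\partial[\Omega]_{T(x_0)}$ and $\partial[\Omegabar]_{x_0}$) are related through the Kim--McCann metric. The underlying mechanism is the same one used in Propositions \ref{DbetaFquadcost} and \ref{DbetaFexpression}, namely that $\partial\Omega$ can be described as the zero set of $\hstar(T(x))=\G(x,\nabla u(x))$, so its $w$-second fundamental form picks up both the intrinsic geometry of $\partial\Omegabar$ (through $D^2\hstar$, i.e. $\G_{p_kp_s}$) and the geometry of $\partial\Omega$ in the "dual" chart (through the $c^{r,\ell}c_{ij,r}\nu^\ell$ correction terms). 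So really the proof is a geometric repackaging of the computation already carried out in Section \ref{sec: boundary sublinear}.

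First I would fix the point $x_0\in\partial\Omega$ and choose convenient coordinates: Euclidean coordinates on $\Omega$ and $\Omegabar$, and use the identification of cotangent spaces with $\R^n$ via the underlying flat metrics, so that $w_{ij}(x_0)=u_{ij}+c_{ij}(x_0,T(x_0))$ as recorded after the definition of $h$. Next I would write down the unit $w$-normal to $\partial\Omega$ at $x_0$: by Lemma \ref{Wbetaisnormal} (after the sign change $c\mapsto -c$, so $W$ becomes $w$ in the current notation) the vector $W\beta$ is Euclidean-normal to $\partial\Omega$, and one checks that $\beta/\|\beta\|_w$ is the $w$-unit normal — indeed $\|\beta\|_w^2 = w_{ij}\beta^i\beta^j = \langle W\beta,\beta\rangle$. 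Then $\II^w_{x_0}(\tau_1,\tau_2)$ is computed by the standard formula $\II^w(\tau_1,\tau_2) = -\langle \nabla^w_{\tau_1}\tau_2, n_w\rangle_w$ where $n_w = \beta/\|\beta\|_w$; expanding $\nabla^w$ in terms of the Christoffel symbols of $w$ (equivalently, differentiating $w_{ij}$) and using that $\beta\cdot\nabla(\text{tangent vector})$ combinations reproduce exactly the quantities $D_\beta w_{ij}$ appearing in \eqref{DbetaFgencost1}, one recovers $2\|\beta\|_w\II^w_{x_0}(\tau_1,\tau_2)$ as $-(D_\beta w_{ij})\tau^i_1\tau^j_2$ plus lower-order terms — precisely the left side structure of the computation in Proposition \ref{DbetaFexpression}.

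For the right-hand side, I would identify the two Euclidean second fundamental forms. The set $[\Omegabar]_{x_0} = -Dc(x_0,\Omegabar)$ has boundary $-Dc(x_0,\partial\Omegabar)$, and its Euclidean second fundamental form at the point $-Dc(x_0,T(x_0))$, evaluated on the pushed-forward tangent vectors $\hat{\overline\tau}_i$, is computed from the defining function $p\mapsto \hstar(X(\cdot,\ldots))$ — this is the term governed by the matrix $c^{r,\ell}c_{ij,r}\nu^\ell$ in \eqref{cconvexity}, which is exactly the correction appearing in \eqref{DbetaFgencost3}. Symmetrically, $\partial[\Omega]_{T(x_0)} = -\Dbar c(\partial\Omega, T(x_0))$ and its Euclidean second fundamental form at $-\Dbar c(x_0,T(x_0))$ on the vectors $\hat\tau_i$ is governed by the matrix $\hstar_{\ell r}Y^\ell_{p_k}Y^r_{p_s} + \hstar_\ell Y^\ell_{p_kp_s} = \G_{p_kp_s}$ from \eqref{cstarconvexity}. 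The normalizing factors $\|DT(x_0)\beta(x_0)\|$ and $\|\beta(x_0)\|$ arise as the lengths of the respective Euclidean normals to these hypersurfaces (note $\|DT(x_0)\beta(x_0)\| = \|W\beta\| = \chi$ in the notation of Section \ref{sec: boundary sublinear}, up to the sign change). Assembling these three pieces and matching term-by-term with the decomposition of $-(D_\beta w_{ij})\tau^i\tau^j$ obtained in the proof of Proposition \ref{DbetaFexpression} then yields \eqref{eqn: II equality}.

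The main obstacle I expect is bookkeeping: correctly tracking the sign convention switch (the replacement $c\mapsto -c$ and minimization versus maximization), the barred/unbarred index conventions of \cite{KimMcCann10}, and the metric identifications of cotangent spaces with $\R^n$, so that the factors of $2$ (coming from the $\frac12$ in the definition of $h$), the normal-length normalizations, and the Christoffel-symbol contributions all land consistently. There is also a genuinely geometric point to be made precise: one must verify that $(\Id\times T)$ maps $\partial\Omega$ into the zero set of a function whose $\Omega$-part and $\Omegabar$-part of the differential are, respectively, $-Dc(x_0,\partial\Omegabar)$-related and $-\Dbar c(\partial\Omega,T(x_0))$-related, which is where the product splitting $dc = Dc\oplus\Dbar c$ and the hypothesis that $T(x_0)\in\partial\Omegabar$ (guaranteed by the boundary condition $\G(x,\nabla u)=0$, i.e. $\hstar(T(x))=0$) get used. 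Once that structural statement is in place the rest is the routine differentiation already performed in Section \ref{sec: boundary sublinear}, merely reorganized invariantly.
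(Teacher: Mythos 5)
Your plan is a genuinely different route from the paper's. The paper does not touch the Christoffel symbols of $w$ or the Euclidean computation of Section \ref{sec: boundary sublinear} at all: it writes $\II^w$ through the embedding $\Id\times T$ into $(\Omega\times\Omegabar,h)$, introduces the chart $\Phi^{-1}$ with $\Phi(x,\xbar)=-Dc(x_0,\xbar)\oplus(-\Dbar c(x,T(x_0)))$, shows that $\partial\Phi/\partial X=2h$ at $(x_0,T(x_0))$ and, crucially, that \emph{all} Christoffel symbols of $h$ vanish there in these coordinates (via \cite[Lemma 4.4]{KimMcCann10} applied to the $n$-dimensional cone of inward directions guaranteed by $c$- and $c^*$-convexity, plus a polarization trick); the covariant derivative then reduces to coordinate derivatives that split along the product structure into exactly the two Euclidean second fundamental forms, with no third derivatives of $u$ or of $c$ ever appearing. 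Your proposal instead computes everything in Euclidean coordinates using defining functions and the $D_\beta w_{ij}$ expansion of Proposition \ref{DbetaFexpression}; this can be made to work and would be a more pedestrian, self-contained argument, but the cancellation of third-order terms that the paper's coordinates handle automatically must then be done by hand.

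As written, though, the proposal has concrete gaps precisely where the content of the theorem lies, and these cannot be dismissed as bookkeeping. First, your one explicit normalization claim is false for general costs: $DT(x_0)\beta=(D^2_{x,y}c)^{-1}W\beta=\chi\,(D^2_{x,y}c)^{-1}\nu$ by \eqref{derivative of T}, so $\norm{DT\beta}\neq\norm{W\beta}=\chi$ except for the inner-product cost. The correct accounting is that the Euclidean gradient of the defining function $q\mapsto h(X(q,T(x_0)))$ of $\coord{\Omega}{T(x_0)}$ equals $DT\beta/\chi$, so $\norm{DT\beta}\,\II^{\partial\coord{\Omega}{T(x_0)}}(\hat\tau,\hat\tau)=\chi\,D^2_q(h\circ X)(\hat\tau,\hat\tau)=\chi\bigl(\nu^j_i-c^{r,\ell}c_{ij,r}\nu^\ell\bigr)\tau^i\tau^j$; the factor $\chi$ emerges from the ratio, not from $\norm{DT\beta}$ itself. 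Relatedly, you have the two identifications swapped: the matrix in \eqref{cconvexity} governs $\II^{\partial\coord{\Omega}{T(x_0)}}$, while $\G_{p_kp_s}$ (whose gradient is $\beta$, giving the weight $\norm\beta$) governs $\II^{\partial\coord{\Omegabar}{x_0}}$, as in \eqref{cstarconvexity}. Second, Proposition \ref{DbetaFexpression} cannot be cited verbatim: its computation is carried out at a maximum point for the single vector $\tau=W^{-1}\nabla f$ with $f=\log v$ and $D_\beta v\equiv0$ on $\partial\Omega$, and it uses $(D_\beta f)_i=0$, $\tau^n=0$ and the time-derivative terms throughout. To prove the bilinear identity for arbitrary $\tau_1,\tau_2\in T_{x_0}\partial\Omega$ you must either redo the computation of $2\norm{\beta}_w\II^w(\tau_1,\tau_2)=2w(\nabla^w_{\tau_1}\beta,\tau_2)$ with arbitrary tangential extensions, verifying that the third derivatives of $u$ entering the Christoffel symbols of $w$ cancel against those in $D^2(\hstar\circ T)$, or argue that every tangential $\tau$ is realized as $W^{-1}\nabla f(x_0)$ for some $f$ with $D_\beta f\equiv0$ near $x_0$ on $\partial\Omega$ and then polarize. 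With these points repaired your strategy does yield \eqref{eqn: II equality}, but in its current form the key identities are asserted rather than proved, and one of them is stated incorrectly.
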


\begin{proof}
Fix any point $x_0\in \partial \Omega$. Note by \eqref{Wbetaisnormal} that $\beta(x_0)$ is an (outward) normal to $\partial \Omega$ at $x_0$ with respect to the metric $w$. Then since $\Id\times T$ is an embedding of $\Omega$ into $\Omega\times \Omegabar$, if $\nabla^{h}$ is the Levi-Civita connection of $h$, we have (using that $\tau_2$ is tangent to $\partial \Omega$ in the second line)
\begin{align}
\II_{x_0}^w(\tau_1, \tau_2)&=w\left(\nabla^w_{\tau_1}\frac{\beta}{\norm{\beta}_w}, \tau_2\right)=\norm{\beta}_w^{-1}w(\nabla^w_{\tau_1}\beta, \tau_2)+D_{\tau_1}(\norm{\beta}_w^{-1})w(\beta, \tau_2)\notag\\
 &=\norm{\beta}_w^{-1}w(\nabla^w_{\tau_1}\beta, \tau_2)=-\norm{\beta}_w^{-1}w(\beta, \nabla^w_{\tau_1}\tau_2)\notag\\
 &=-\norm{\beta}_w^{-1}h\left((\beta \oplus DT(x_0) \beta), \nabla^{h}_{(\tau_1 \oplus DT(x_0) \tau_1)}(\tau_2 \oplus DT(x_0) \tau_2)\right)\notag\\
 &=-\norm{\beta}_w^{-1}(\beta \oplus DT(x_0) \beta)^\flat \left[\nabla^{h}_{(\tau_1 \oplus DT(x_0) \tau_1)}(\tau_2 \oplus DT(x_0) \tau_2)\right],\label{eqn: second FF in KM}
\end{align}
where $\flat$ is the operation of lowering the indices of a tangent vector to $\Omega\times \Omegabar$ by the metric $h$.  
Next consider the mapping $\Phi(x, \xbar):= -D c(x_0, \xbar)\oplus (-\Dbar c(x, T(x_0)))$. By the bi-twist condition \eqref{bi-twist}, $\Phi$ is a diffeomorphism on $\Omega\times \Omegabar$, hence $\Phi^{-1}$ gives a global coordinate chart on the set. We will use hats to denote quantities related to $h$ written in the coordinates given by $\Phi^{-1}$, while quantities without hats will be in Euclidean coordinates. A quick calculation yields that 
\begin{align}\label{eqn: metric equals coordinate change}
\frac{\partial\Phi^\delta}{\partial X^\gamma}(x_0, T(x_0))=2h_{\delta\gamma}{(x_0, T(x_0))}.
\end{align}

We will now calculate the Christoffel symbols $\hat \Gamma^\delta_{\gamma \lambda}$ in the coordinates given by $\Phi^{-1}$. By \cite[Lemma 4.1]{KimMcCann10} the Christoffel symbols of $h$ in Euclidean coordinates are identically zero unless all three of the indices are simultaneously between $1$ and $n$, or between $n+1$ and $2n$. Thus the standard transformation law shows that in the coordinates given by $\Phi^{-1}$, the only Christoffel symbols that can be nonzero are those where either the upper index is not barred and both lower indices are, or the upper index is barred and both lower indices are not. Since $\Omega$ is $c$-convex with respect to $\Omegabar$, there is an $n$-dimensional cone $K(x_0)$ of directions that point inward to $\coord{\Omega}{T(x_0)}$ from the boundary point $-\Dbar c(x_0, T(x_0))$. By \cite[Lemma 4.4]{KimMcCann10}, for any such direction $v$ in this cone $K(x_0)$, any segment of the form $s\mapsto \Phi^{-1}(sv\oplus -Dc(x_0, T(x_0)))$ is a geodesic for $h$, for small $s>0$. Thus plugging such a segment into the geodesic equations in $\Phi^{-1}$ coordinates yields for any fixed $\overline i$, at $(x_0, T(x_0))$,
\begin{align*}
 0=\hat\Gamma^{\overline i}_{jk}v^jv^k.
\end{align*}
Suppose $\{v_l\}_{l=1}^n$ is a linearly independent collection of vectors in $K(x_0)$, then for any $1\leq l_1\neq l_2\leq n$ we have
\begin{align*}
 0=\hat\Gamma^{\overline i}_{jk}(v_{l_1}^j+v_{l_2}^j)(v_{l_1}^k+v_{l_2}^k)=\hat\Gamma^{\overline i}_{jk}v_{l_1}^jv_{l_1}^k+\hat\Gamma^{\overline i}_{jk}v_{l_2}^jv_{l_2}^k+\hat\Gamma^{\overline i}_{jk}v_{l_1}^jv_{l_2}^k+\hat\Gamma^{\overline i}_{jk}v_{l_2}^jv_{l_1}^k=2\hat\Gamma^{\overline i}_{jk}v_{l_1}^jv_{l_2}^k,
\end{align*}
which implies all Christoffel symbols of the form $\hat\Gamma^{\overline i}_{jk}$ are also zero. A similar argument reversing the roles of $\Omega$ and $\Omegabar$ yields that \emph{all} Christoffel symbols of $h$ are zero in the $\Phi^{-1}$ coordinates at the point $(x_0, T(x_0))$.

Now using \eqref{eqn: metric equals coordinate change}, we see that the coordinates of the $1$-form $(\beta \oplus DT(x_0) \beta)^\flat$ in $\Phi^{-1}$ are equal to the Euclidean coordinates of the tangent vector $\frac{1}{2}(\beta \oplus DT(x_0) \beta)$. Also we can calculate for $i=1$ or $2$,
\begin{align*}
(\widehat{\tau_i\oplus DT(x_0)\tau_i})^j&=\frac{\partial\Phi^j}{\partial X^{\overline k}}(x_0, T(x_0))(\tau_i\oplus DT(x_0)\tau_i)^{\overline k}\\
&=-c_{j\overline k}(DT(x_0)\tau_i)^{k}=\hat{\overline{\tau}}_i^j,\\
(\widehat{\tau_i\oplus DT(x_0)\tau_i})^{\overline j}&=\frac{\partial\Phi^{\overline j}}{\partial X^{k}}(x_0, T(x_0))(\tau_i\oplus DT(x_0)\tau_i)^{k}\\
&=-c_{k\overline j}\tau_i^{k}=\hat{\tau}_i^j
\end{align*}
where we have identified $T^*_{x_0}\Omega$ and $T^*_{T(x_0)}\Omegabar$ with $\R^n$ to write the vectors $\hat\tau_i$ and $\hat{\overline{\tau}}_i$ defined in the statement of the theorem in Euclidean coordinates. Combining this fact with \eqref{eqn: metric equals coordinate change}, we can write \eqref{eqn: second FF in KM} in the coordinates given by $\Phi^{-1}$ as
\begin{align}\label{eqn: in good coordinates}
 -\frac{1}{2}\norm{\beta}_w^{-1}\left(\hat {\overline{\tau}}_1^j\sum_{i=1}^n\beta^i(\partial_{\hat x^j}\hat{\overline{\tau}}_2^i)+\hat{\tau}_1^l\sum_{k=1}^n(DT(x_0) \beta)^k(\partial_{\hat x^l}\hat{\tau}_2^k)\right).
\end{align}
Now we can see that the function $h^*(Y(x_0, \cdot))$ is a defining function for the set $\coord{\Omegabar}{x_0}$, hence identifying $T^*_{x_0}\Omega$ with $\R^n$ and differentiating yields that $\nabla_ph^*(Y(x_0, p))$ is in the outward normal direction for $p\in\partial \coord{\Omegabar}{x_0}$. In particular, the unit outward normal vector to $\partial \coord{\Omegabar}{x_0}$ at $-Dc(x_0, T(x_0))$ has coordinates given by $\dfrac{\beta^i}{\norm{\beta}}$. A similar calculation involving $h(X(T(x_0), \cdot))$ yields that the coordinates of the unit outward normal vector to $\partial \coord{\Omega}{T(x_0)}$ at $-\Dbar c(x_0, T(x_0))$ are given by $\dfrac{(DT(x_0) \beta)^k}{\norm{DT(x_0) \beta}}$. Additionally, since each $\tau_i$ is tangent to $\partial \Omega$, we see that $\hat \tau_i$ and $\hat{\overline{\tau}}_i$ are respectively tangent to $\partial \coord{\Omegabar}{x_0}$ and $\partial \coord{\Omega}{T(x_0)}$. Thus we calculate 
\begin{align*}
&\II_{-D c(x_0, T(x_0))}^{\partial\coord{\Omegabar}{x_0}}(\hat{\overline{\tau}}_1, \hat{\overline{\tau}}_2)=\la \nabla_{\hat{\overline{\tau}}_1}\frac{\beta}{\norm{\beta}}, \hat{\overline{\tau}}_2 \ra=\norm{\beta}^{-1}\inner{\nabla_{\hat{\overline{\tau}}_1}\beta}{\hat{\overline{\tau}}_2}+D_{\hat{\overline{\tau}}_1}\left(\frac{1}{\norm{\beta}}\right)\inner{\beta}{\hat{\overline{\tau}}_2}\\
&=\norm{\beta}^{-1}\inner{\nabla_{\hat{\overline{\tau}}_1}\beta}{\hat{\overline{\tau}}_2}=\norm{\beta}^{-1}(D_{\hat{\overline{\tau}}_1}\inner{\beta}{\hat{\overline{\tau}}_2}-\inner{\beta}{\nabla_{\hat{\overline{\tau}}_1}\hat{\overline{\tau}}_2})=-\norm{\beta}^{-1}\inner{\beta}{\nabla_{\hat{\overline{\tau}}_1}\hat{\overline{\tau}}_2}\\
&=-\norm{\beta}^{-1}\hat{\overline{\tau}}_1^j\sum_{i=1}^n\beta^i(\partial_{\hat x^j}\hat{\overline{\tau}}_2^i)\\
 \end{align*}
 and likewise
\begin{align*}
 &\II_{-\Dbar c(x_0, T(x_0))}^{\partial\coord{\Omega}{T(x_0)}}(\hat{\tau}_1, \hat{\tau}_2)=\la \nabla_{\hat{\tau}_1}\frac{DT(x_0) \beta}{\norm{DT(x_0) \beta}}, \hat{\tau}_2 \ra=-\norm{DT(x_0) \beta}^{-1}\inner{DT(x_0) \beta}{\nabla_{\hat{\tau}_1}\hat{\tau}_2}\\
 &=-\norm{DT(x_0) \beta}^{-1}\hat{\tau}_1^l\sum_{k=1}^n(DT(x_0) \beta)^k(\partial_{\hat x^l}\hat{\tau}_2^k).
\end{align*}
Comparing this with \eqref{eqn: in good coordinates} completes the proof of the theorem.
\end{proof}

The relevance of the above theorem to our current exponential convergence result is as follows. In terms of the metric $w$, we see that the $\beta$ directional derivative of the first term in the function $F$ defined by \eqref{aux} is given by (at $x_0$)
\begin{align*}
 D_\beta \left(w(\nabla^w f, \nabla^wf)\right)&=2w(\nabla^w_{\beta}\nabla^w f, \nabla^wf)=\Hess f(\beta, \nabla^wf)\\
 &=\Hess f( \nabla^wf, \beta)=2w(\nabla^w_{\nabla^w f}\nabla^wf, \beta)=-2w(\nabla^w_{\nabla^w f}\beta, \nabla^wf)\\
 &=-2\norm{\beta}_w\II^w(\nabla^w f, \nabla^w f).
\end{align*}
 Here we repeatedly used that $\nabla^wf$ is tangent to $\partial \Omega$ (due to the boundary condition $D_\beta v=0$ and since $f=\log v$) while $\beta$ is normal in the metric $w$, and we have used \eqref{eqn: second FF in KM} in the last line. Under the $c$- and $c^*$-convexity conditions \eqref{cconvexity} and \eqref{cstarconvexity}, the two terms on the right hand side of \eqref{eqn: II equality} are nonnegative, hence by Theorem \ref{thm: second fundamental forms}, $D_\beta w(\nabla^w f, \nabla^wf)$ is nonpositive. Thus in order to obtain a contradiction with the Hopf lemma as in Section \ref{sec: exponential}, all that remains is to evaluate the last term $-\alpha D_\beta(\partial_tf)$. Obtaining a sign on this term depends on the specific choice of the function $v$, as in Section \ref{sec: exponential}.
 
\section*{Acknowledgments}

The authors would like to thank Micah Warren for bringing our attention to the special case $n = 2$, and for pointing out the reference \cite{Warren14}.

\bibliography{parabolic}
\bibliographystyle{amsplain}

\end{document}